\newtheorem{theorem}{Theorem}[section]
\newtheorem{proposition}[theorem]{Proposition}
\newtheorem{lemma}[theorem]{Lemma}
\newtheorem{definition}[theorem]{Definition}
\newtheorem{example}[theorem]{Example}
\newtheorem{remark}[theorem]{Remark}
\newtheorem{assumption}[theorem]{Assumption}
\begin{document}
\author{Hee Sun Jung$^1$ and Ryozi Sakai$^2$}
\address{$^{1}$Department of Mathematics Education, Sungkyunkwan University,
Seoul 110-745, Republic of Korea.}
\email{hsun90@skku.edu}
\address{$^{2}$Department of Mathematics, Meijo University, Nagoya 468-8502, Japan.}
\email{ryozi@crest.ocn.ne.jp}

\title[]{$L_p$-Convergence of higher order Hermite or Hermite-Fej\'er interpolation polynomials with
exponential-type weights}
\date{\today}
\maketitle

\begin{abstract}
Let $\mathbb{R}=(-\infty,\infty)$,
and let $Q\in C^1(\mathbb{R}): \mathbb{R}\rightarrow \mathbb{R^+}=[0,\infty)$ be an even function,
which is an exponent. We consider the weight $w_\rho(x)=|x|^{\rho} e^{-Q(x)}$, $\rho\geqslant 0$, $x\in \mathbb{R}$,
and then we can construct the orthonormal polynomials $p_{n}(w_\rho ^2;x)$ of degree n for $w_\rho ^2(x)$.
In this paper we obtain $L_p$-convergence theorems of even order Hermite-Fej\'er interpolation polynomials
at the zeros $\left\{x_{k,n,\rho}\right\}_{k=1}^n$ of $p_{n}(w_\rho ^2;x)$.
\end{abstract}

MSC; 41A05, 41A10\\
Keywords; higher order Hermite-Fej\'er interpolation polynomials

\setcounter{equation}{0}
\section{Introduction}

Let $\mathbb{R}=(-\infty,\infty)$,
and let $Q\in C^1(\mathbb{R}): \mathbb{R}\rightarrow \mathbb{R^+}=[0,\infty)$ be an even function.
Consider the weight $w(x)=\exp(-Q(x))$ and define for $\rho>-\frac{1}{2}$,
\begin{equation}\label{eq1.1}
   w_\rho(x):=|x|^{\rho} w(x), \quad  x\in \mathbb{R}.
\end{equation}
Suppose that $\int_0^\infty x^nw_\rho^2(x)dx<\infty$ for all $n=0,1,2,\ldots$.
Then, we can construct the orthonormal polynomials $p_{n,\rho}(x)=p_{n}(w_\rho ^2;x)$ of degree $n$ for $w_\rho ^2(x)$, that is,
\begin{equation*}
   \int_{-\infty}^\infty p_{n,\rho}(x)p_{m,\rho}(x)w_\rho ^2(x)dx=\delta_{mn}  (\textrm{Kronecker's delta}).
\end{equation*}
Denote
\begin{equation*}
   p_{n,\rho}(x)=\gamma_{n}x^n+\ldots,  \gamma_{n}=\gamma_{n,\rho}>0,
\end{equation*}
and the zeros of $p_{n,\rho}(x)$ by
\begin{equation*}
   -\infty<x_{n,n,\rho}<x_{n-1,n,\rho}<\ldots<x_{2,n,\rho}<x_{1,n,\rho}<\infty.
\end{equation*}
Let $\mathcal{P}_n$ denote the class of polynomials with degree at most $n$.
For $f\in C(\mathbb{R})$ we define the higher order Hermite-Fej\'er interpolation polynomial
$L_n(\nu,f;x)$ based at the zeros $\left\{x_{k,n,\rho}\right\}_{k=1}^n$ as follows:
\begin{equation*}
L_n^{(i)}(\nu,f;x_{k,n,\rho})=\delta_{0,i}f(x_{k,n,\rho})
\quad \textrm{for} \quad    k=1,2,\ldots,n    \quad \textrm{and} \quad    i=0,1,\ldots,\nu-1.
\end{equation*}
$L_n(1,f;x)$ is the Lagrange interpolation polynomial, $L_n(2,f;x)$ is the ordinary Hermite-Fej\'er interpolation polynomial,
and $L_n(4,f;x)$ is the Krilov-Stayermann polynomial.
The fundamental polynomials $h_{k,n,\rho}(\nu;x)\in \mathcal{P}_{\nu n-1}$
for the higher order Hermite-Fej\'er interpolation polynomial $L_n(\nu,f;x)$ are defined as follows:
\begin{equation}\label{eq1.2}
\begin{array}{l}
h_{k,n,\rho}(\nu;x)=l_{k,n,\rho}^{\nu}(x)   \sum_{i=0}^{\nu -1}e_i(\nu,k,n)(x-x_{k,n,\rho})^i, \\
l_{k,n,\rho}(x)=\frac{p_n(w_{\rho}^2;x)}{(x-x_{k,n,\rho})p'_n(w_{\rho}^2;x_{k,n,\rho})},\\
h_{k,n,\rho}(\nu;x_{p,n,\rho})=\delta_{k,p},  \quad  h_{k,n,\rho}^{(i)}(\nu;x_{p,n,\rho})=0, \\
\quad         k,p=1,2,\ldots,n,   i=1,2,\ldots,\nu -1.
\end{array}
\end{equation}
Using them, we can write $L_n(\nu,f;x)$ as follows:
\begin{equation*}
  L_n(\nu,f;x)= \sum_{k=1}^nf(x_{k,n,\rho})h_{k,n,\rho}(\nu;x).
\end{equation*}
Furthermore, we extend the operator $L_n(\nu,f;x)$. Let $l$ be a non-negative integer,
and let $l\le \nu -1$. For $f \in C^{l}(\mathbb{R})$
we define the $(l,\nu)$-order Hermite-Fej\'er interpolation polynomials $L_n(l,\nu,f;x)\in \mathcal{P}_{\nu n-1}$
as follows: For each $k=1,2,\ldots,n,$
\begin{equation*}
\begin{array}{c}
L_n(l,\nu,f;x_{k,n,\rho})=f(x_{k,n,\rho}),\\
L_n^{(j)}(l,\nu,f;x_{k,n,\rho})=f^{(j)}(x_{k,n,\rho}),  \quad j=1,2,\ldots,l,\\
L_n^{(j)}(l,\nu,f;x_{k,n,\rho})=0,   \quad j=l+1,l+2,\ldots,\nu -1.
\end{array}
\end{equation*}
Especially, $L_n(0,\nu,f;x)$ is equal to $L_n(\nu,f;x)$, and for each $P\in \mathcal{P}_{\nu n-1}$
we see $L_n(\nu-1,\nu,P;x)=P(x)$.
The fundamental polynomials $h_{s,k,n,\rho}(l,\nu;x)\in \mathcal{P}_{\nu n-1},   k=1,2,\ldots,n$, of $L_n(l,\nu,f;x)$ are defined by
\begin{equation}\label{eq1.3}
\begin{array}{c}
h_{s,k,n,\rho}(l,\nu;x)=l_{k,n,\rho}^{\nu}(x)   \sum_{i=s}^{\nu -1}e_{s,i}(\nu,k,n)(x-x_{k,n,\rho})^i,\\
h_{s,k,n,\rho}^{(j)}(l,\nu;x_{p,n,\rho})=\delta_{s,j}\delta_{k,p},   j,s=0,1,\ldots,\nu -1,  p=1,2,\ldots,n.
\end{array}
\end{equation}
Then we have
\begin{equation*}
L_n(l,\nu,f;x)=   \sum_{k=1}^n    \sum_{s=0}^lf^{(s)}(x_{k,n,\rho})h_{s,k,n,\rho}(l,\nu;x).
\end{equation*}
For the ordinary Hermite and Hermite-Fej\'er interpolation polynomial $L_n(1,2,f;x)$, $L_n(2,f;x)$ and the related approximation process,
 Lubinsky \cite{[15]} gave some interesting convergence theorems.

In \cite{[7]} we obtained uniform convergence theorems
with respect to the interpolation polynomials $L_n(\nu,f;x)$ and $L_n(l,\nu,f;x)$ with even integer $\nu$.
In this paper we will give the $L_p$-convergence theorems of $L_n(\nu,f;x)$ and $L_n(l,\nu,f;x)$ with even order $\nu$.
If we consider the higher order Hermite-Fej\'er interpolation polynomial $L_n(\nu,f;x)$ on a finite interval,
then we can see a remarkable difference between the cases of an odd number $\nu$ and of an even number $\nu$,
for example, as between the Lagrange interpolation polynomial $L_n(1,f;x)$
and the Hermite-Fej\'er interpolation polynomial $L_n(2,f;x)$ (\cite{[17]}-\cite{[22]}). We can also see a similar phenomenon
in the cases of the infinite intervals (\cite{[7]}-\cite{[14]}). In this paper, we consider the even case in $L_p$-norm.
We will discuss the odd case in $L_p$-norm elsewhere.

Here, we give the class of weights which is treated in this paper.
We say that $f: \mathbb{R}\rightarrow \mathbb{R^+}$ is quasi-increasing if there exists $C>0$
such that $f(x)\le Cf(y),  0<x<y$.
In the following, we introduce the class of weights defined in \cite{[16]}.
\begin{definition}[see \cite{[16]}]\label{Definition1.1}
Let $Q:  {\mathbb{R}}\rightarrow {\mathbb{R}}^+$ be a continuous even function satisfying the following properties:
\item[\,\,\,(a)] $Q'(x)$ is continuous in ${\mathbb{R}}$ and $Q(0)=0$.
\item[\,\,\,(b)] $Q''(x)$ exists and is positive in ${\mathbb{R}}\setminus\{0\}$.
\item[\,\,\,(c)] $  \lim_{x\rightarrow \infty}Q(x)=\infty.$
\item[\,\,\,(d)] The function
\begin{equation*}
T(x):=\frac{xQ'(x)}{Q(x)}, \quad x\neq 0
\end{equation*}
is quasi-increasing in $(0,\infty)$, with
\[
T(x)\geqslant \Lambda>1, \quad x\in {\mathbb{R}}^+\setminus\{0\}.
\]
\item[\,\,\,(e)] There exists $C_1>0$ such that
\begin{equation*}
\frac{Q''(x)}{|Q'(x)|}\le C_1\frac{|Q'(x)|}{Q(x)},  \quad a.e. \quad x\in {\mathbb{R}}\setminus\{0\}.
\end{equation*}
Then we say that $w=\exp(-Q)$ is in the class $\mathcal{F}(C^2)$.
Besides, if there exists a compact subinterval $J(\ni 0)$ of ${\mathbb{R}}$ and $C_2>0$ such that
\begin{equation*}
\frac{Q''(x)}{|Q'(x)|}\geqslant C_2\frac{|Q'(x)|}{Q(x)},  \quad a.e. \quad  x\in {\mathbb{R}}\setminus J,
\end{equation*}
then we say that $w=\exp(-Q)$ is in the class $\mathcal{F}(C^2+)$.
If $T(x)$ is bounded,
then $w$ is called a Freud-type weight, and if $T(x)$ is unbounded, then $w$ is Erd\"os-type weight.
\end{definition}

Some typical examples in $\mathcal{F}(C^2+)$ are given as follows:
\begin{example}[see \cite{[16]}]\label{Examples1.2}{\rm
(1)
For $\alpha>1$ and a non-negative integer  $\ell$, we put
\begin{equation*}
Q(x)=Q_{\ell,\alpha}(x):=\exp_{\ell}(|x|^{\alpha})-\exp_{\ell}(0),
\end{equation*}
where for $\ell \geqslant 1$,
\begin{equation*}
\exp_{\ell}(x):=\exp(\exp(\exp(\cdots\exp x)\ldots))  \quad (\ell \textrm{-times})
\end{equation*}
and $\exp_0(x):=x$.
\item[\,\,\,(2)]
For $m\geqslant 0$, $\alpha\geqslant 0$ with $\alpha+m>1$, we put
\begin{equation}\label{eq1.4}
Q(x)=Q_{\ell,\alpha,m}(x):=|x|^m\{\exp_{\ell} (|x|^\alpha) -\alpha^*\exp_{\ell}(0)\},
\end{equation}
where for $\ell>0$ we suppose $\alpha^*=0$ if $\alpha=0$; $\alpha^*=1$ otherwise. For $\ell =0$ we suppose $m>1$ and $\alpha=0$.
Note that $Q_{\ell,0,m}$ is a Freud-type weight.
\item[\,\,\,(3)]
For $\alpha>1$, we put
\begin{equation*}
Q(x)=Q_{\alpha}(x):=(1+|x|)^{|x|^\alpha} -1.
\end{equation*}
}
\end{example}
To consider the higher order Hermite-Fej\'er interpolation polynomial
we define a class of further strengthened  weights for $\nu\geqslant 2$
than Definition \ref{Definition1.1} as follows:
\begin{definition}[{cf.\cite{[7]}}]\label{Definition1.3}
Let $w(x)=\exp(-Q(x))\in \mathcal{F}(C^2+)$, and let $\nu\geqslant 2$ be an integer.
Assume that $Q(x)$ is a $\nu$-times continuously differentiable function on ${\mathbb{R}}$ and satisfies the following:
\item[\,\,\,(a)] $Q^{(\nu+1)}(x)$ exists and $Q^{(i)}(x),  0\le i\le \nu+1,$ are positive for $x>0$.
\item[\,\,\,(b)] There exist constants $C_i>0$ such that
\begin{equation*}
\left|Q^{(i+1)}(x)\right|\le C_i\left|Q^{(i)}(x)\right|\frac{|Q'(x)|}{Q(x)},
\quad x\in {\mathbb{R}}\backslash\{0\}, \quad i=1,2,\ldots \nu.
\end{equation*}
\item[\,\,\,(c)] There exist $0\le\delta<1$ and $c_1>0$ such that
\begin{equation}\label{eq1.5}
Q^{(\nu+1)}(x)\le C\left(\frac{1}{x}\right)^{\delta},   \quad  x\in (0,c_1].
\end{equation}
Then we say that  $w(x)=\exp(-Q(x))$ is in the class $\mathcal{F}_\nu(C^2+)$.
\item[\,\,\,(d)] Suppose one of the following:
\begin{enumerate}
\item[(d-1)] $Q'(x)/Q(x)$  is quasi-increasing on a certain positive interval $[c_2,\infty)$.
\item[(d-2)] $Q^{(\nu+1)}(x)$  is non-decreasing on a certain positive interval   $[c_2,\infty)$.
\item[(d-3)] There exist constants $C>0$ and $0\le \delta<1$
such that $Q^{(\nu+1)}(x)\le C(1/x)^{\delta}$  on $(0,\infty)$.
\end{enumerate}
Then we write $w(x)=\exp(-Q(x))\in\mathcal{\tilde{F}}_\nu(C^2+)$.
\end{definition}
In this paper we treat the weight $w_\rho(x)=|x|^{\rho}\exp(-Q(x))$, $\rho\geqslant 0$ of type (\ref{eq1.1}).
For $w_\rho(x)=|x|^{\rho}\exp(-Q(x))$, $w(x)=\exp(-Q(x))\in \tilde{\mathcal{F}}_\nu(C^2+)$,
we write $w_\rho\in \tilde{\mathcal{F}}_{\nu,\rho}(C^2+)$.

In the following, we give specific examples of $w \in \tilde{\mathcal{F}}_\nu(C^2+)$.
\begin{example}[{cf.\cite[Theorem 3.1]{[6]}}]\label{Example1.4} {\rm
Let $\nu$ be a positive integer,
and let $Q_{\ell,\alpha,m}$ be defined in (\ref{eq1.4}).
\item[\,\,\,(1)] Let $m$ and $\alpha$ be non-negative even integers with $m+\alpha>1$.
Then $w(x)=\exp(-Q_{\ell,\alpha,m})\in \mathcal{F}_\nu(C^2+)$, and one has the following.
\item[\qquad (a)] If $\ell>0$, then we see that $Q'_{\ell,\alpha,m}(x)/Q_{\ell,\alpha,m}(x)$ is quasi-increasing
on a certain positive interval $(c_1,\infty)$, and $Q_{\ell,0,m}(x)$ is non-decreasing on $(0,\infty)$.
\item[\qquad (b)] If $\ell=0$, then we see that $Q_{0,0,m}(x),  m\geqslant 2$, is non-decreasing on $(0,\infty)$.
Hence $w(x)=\exp(-Q_{\ell,\alpha,m})\in \tilde{\mathcal{F}}_{\nu}(C^2+)$.
\item[\,\,\,(2)] Let $m+\alpha-\nu>0$. Then $w(x)=\exp(-Q_{\ell,\alpha,m})\in \mathcal{F}_\nu(C^2+)$,
and one has the following.
\item[\qquad (c)] If $\ell\geqslant 2$ and $\alpha>0$, then there exists a constant $c_1>0$
such that $Q'_{\ell,\alpha,m}(x)/Q_{\ell,\alpha,m}(x)$ is quasi-increasing on $(c_1,\infty)$.
\item[\qquad (d)] Let $\ell=1$.
If $\alpha\geqslant 1$, then there exists a constant $c_2>0$
such that $Q'_{1,\alpha,m}(x)/Q_{1,\alpha,m}(x)$ is quasi-increasing on $(c_2,\infty)$,
and if $0<\alpha<1$,
then $Q'_{1,\alpha,m}(x)/Q_{1,\alpha,m}(x)$ is quasi-decreasing on $(c_2,\infty)$.
\item[\qquad (e)] Let $\ell=1$, and $0<\alpha<1$, then $Q^{(\nu+1)}_{1,\alpha,m}(x)$ is non-decreasing
on a certain positive interval on $(c_2,\infty)$.
Hence $w(x)=\exp(-Q_{\ell,\alpha,m})\in \tilde{\mathcal{F}}_{\nu}(C^2+)$.
}
\end{example}
\begin{example}[{\cite[Theorem 3.5]{[6]}}]\label{Example1.5} {\rm
Let $\nu$ be a positive integer and $\alpha > \nu$. Then $w(x)=\exp(-Q_{\alpha}(x))$ belongs to
$\mathcal{F}_{\nu}(C^2+)$.
Moreover, there exists a positive constant $c_2 > 0$ such that
$Q_{\alpha}'(x)/Q_{\alpha}(x)$ is quasi-increasing on $(c_2, \infty)$.
Hence $w(x)=\exp(-Q_{\alpha})\in \tilde{\mathcal{F}}_{\nu}(C^2+)$.
}
\end{example}

In Section 2, we report the $L_p$-convergence theorems.
We write some lemmas to prove the theorems and then we prove them in Section 3.

In what follows we abbreviate several notations as
$x_{k,n}:=x_{k,n,\rho},  h_{kn}(x):=h_{k,n,\rho}(\nu,x)$,
$l_{kn}(x):=l_{k,n,\rho}(x)$,
$h_{skn}(x):=h_{s,k,n,\rho}(\nu,x)$ and $p_n(x):=p_{n,\rho}(x)$ if there is no confusion.
For arbitrary nonzero real valued functions $f(x)$ and $g(x)$, we write $f(x)\sim g(x)$
if there exist constants $C_1,  C_2>0$ independent of $x$ such that $C_1 g(x)\le f(x)\le C_2g(x)$ for all $x$.
For arbitrary positive sequences $\{c_n\}_{n=1}^\infty$ and $\{d_n\}_{=1}^\infty$ we define $c_n\sim d_n$ similarly.

Throughout this paper $C,C_1,C_2,\ldots$ denote positive constants independent of $n,x,t$ or polynomials $P_n(x)$,
and the same symbol does not necessarily denote the same constant in different occurrences.

\setcounter{equation}{0}
\section{Theorems}

We use the following notations.\\
(1)  Mhaskar-Rakhmanov-Saff  numbers $a_x$:
\begin{equation*}
x=\frac{2}{\pi}\int_0^1\frac{a_xuQ'(a_xu)}{(1-u^2)^{1/2}}du, \quad x>0.
\end{equation*}
(2)
\begin{equation}\label{eq2.1}
\varphi_u(x)=
\left\{
\begin{array}{lr}
\frac{|x|}{u}\frac{1-\frac{|x|}{a_{2u}}}{\sqrt{1-\frac{|x|}{a_u}+\delta_u}},  & |x|\le a_u;  \\
\varphi_u(a_u),          & a_u<|x|,
\end{array}
         \right.
\end{equation}
where
\[
\delta_u=(uT(a_u))^{-2/3},  \quad u>0.
\]
In the rest of this paper, we assume the following:
\begin{assumption}\label{Assumption2.1}{\rm
Let the weight $w_\rho\in \tilde{\mathcal{F}}_{\nu,\rho}(C^2+)$, $\rho\geqslant 0$. \\
(a) If $T(x)$ is bounded, then we suppose that for some $C>0$
\begin{equation*}
  Q(x)\geqslant C|x|^{2},
\end{equation*}
and for $\delta$ in (\ref{eq1.5}),
\begin{equation}\label{eq2.2}
  a_n\le C n^{1/(1+\nu-\delta)}.
\end{equation}
(b) There exist $0\le \gamma< 1$ and $C(\gamma)>0$ such that
\begin{equation}\label{eq2.3}
   T(a_n)\le C(\gamma)n^\gamma,
\end{equation}
here, if $T(x)$ is bounded, that is, the weight $w$ is a Freud weight, then we set $\gamma=0$. We put
\begin{equation}\label{eq2.4}
  \varepsilon_n:=
\begin{cases}
         \frac{a_n}{n},& \frac{T(a_n)}{a_n}<1; \\
         \frac{1}{n^{1-\gamma}},& \frac{T(a_n)}{a_n}\geqslant 1.
\end{cases}
\end{equation}
}
\end{assumption}
\begin{lemma}[{\cite[Theorem 1.4]{[4]}}]\label{Lemma2.2}
{\rm (1)} Let $w=\exp(-Q)\in \mathcal{F}(C^2)$, and let $T(x)$ be unbounded.
Then, for any $\eta>0$ there exists $C(\eta)>0$ such that for $t\geqslant 1$,
\begin{equation}\label{eq2.5}
  a_t\le C(\eta)t^{\eta}.
\end{equation}
\item[\,\,\,(2)] Let $\lambda:=C_1$ be the constant in Definition \ref{Definition1.1} (e), that is,
\begin{equation*}
  \frac{Q''(x)}{Q'(x)}\le \lambda\frac{Q'(x)}{Q(x)},  \quad a.e.\quad  x\in \mathbb{R}\backslash\left\{0\right\}.
\end{equation*}
If $1<\lambda$, and if $\eta$ is defined in (\ref{eq2.5}), then there exists $C(\lambda,\eta)$ such that
\begin{equation*}
  T(a_t)\le  C(\lambda,\eta)t^{\frac{2(\eta+\lambda-1)}{\lambda+1}},
\end{equation*}
and if $\lambda\le 1+\eta$, then there exists $C(\lambda,\eta)$ such that
\begin{equation}\label{eq2.6}
  T(a_t)\le C(\lambda,\eta)t^{4\eta}, \quad  t\geqslant 1.
\end{equation}
\end{lemma}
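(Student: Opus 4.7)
The plan is to work from the Mhaskar--Rakhmanov--Saff identity
\[
t=\frac{2}{\pi}\int_0^1\frac{a_t u\,Q'(a_t u)}{\sqrt{1-u^2}}\,du,
\]
together with the quasi-monotonicity of $T$ and the differential inequality from Definition \ref{Definition1.1}(e). Both parts are really about inverting the MRS identity: part (1) converts unboundedness of $T$ into super-polynomial growth of $Q$, and part (2) converts the log-derivative inequality into a pointwise upper bound on $Q'$ in terms of $Q$.

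For part (1), I would fix $\eta>0$ and set $M=1/\eta$. Because $T$ is quasi-increasing and unbounded on $(0,\infty)$ with $T(x)\ge \Lambda>1$, there exists $x_0=x_0(M)$ such that $T(x)\ge M$ for all $x\ge x_0$. Writing $T(x)=x(\log Q)'(x)$ and integrating from $x_0$ to $x$ yields $Q(x)\gtrsim x^M$ for $x\ge x_0$, hence also $xQ'(x)=T(x)Q(x)\gtrsim x^M$. Restricting the MRS integral to $u\in[1/2,1]$ and inserting this lower bound at $y=a_tu$ (valid once $a_t\ge 2x_0$, which holds for $t$ large) gives $t\gtrsim a_t^M$, so $a_t\lesssim t^{1/M}=t^\eta$ as required.

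For part (2), I would first integrate the hypothesis $Q''/Q'\le \lambda Q'/Q$ from a fixed $x_1>0$ to obtain $Q'(x)\le C\,Q(x)^\lambda$, so
\[
T(x)=\frac{xQ'(x)}{Q(x)}\le C\,x\,Q(x)^{\lambda-1}.
\]
Next, the MRS identity together with the standard companion relation $a_tQ'(a_t)\sim t$ gives $Q(a_t)\sim t/T(a_t)$. Substituting $x=a_t$ into the above and rearranging produces an inequality of the shape $T(a_t)^{\lambda}\lesssim a_t\,t^{\lambda-1}$; combining this with the bound $a_t\lesssim t^\eta$ from part (1) yields a power of $t$ for $T(a_t)$. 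In the subregime $\lambda\le 1+\eta$, one may use $a_t\lesssim t^\eta$ more aggressively on both factors and absorb $\lambda-1$ into $\eta$, giving the cruder estimate $T(a_t)\lesssim t^{4\eta}$ stated in (\ref{eq2.6}).

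The principal technical obstacle is landing exactly on the exponent $2(\eta+\lambda-1)/(\lambda+1)$ in the first inequality of part (2). The naive substitution I sketched gives $(\eta+\lambda-1)/\lambda$, which is worse, and squeezing out the stated power requires applying the bound $Q'(x)\le C\,Q(x)^\lambda$ inside the MRS integral (not just at $x=a_t$) and optimizing the split of the interval, or equivalently using the two-sided form $a_tQ'(a_t)\sim t$ together with the inequality applied at the MRS scale in a coupled way. Once this balancing is carried out the remaining work is routine integration of the differential inequality and bookkeeping from part (1).
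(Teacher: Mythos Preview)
The paper does not give its own proof of Lemma~\ref{Lemma2.2}; the result is quoted from \cite[Theorem~1.4]{[4]}. So there is nothing in the present paper to compare against, and your sketch has to stand on its own.

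Your argument for part~(1) is fine. The issue is in part~(2). You invoke the ``standard companion relation $a_tQ'(a_t)\sim t$'', equivalently $Q(a_t)\sim t/T(a_t)$, but for weights in $\mathcal{F}(C^2)$ this is \emph{false} in the Erd\H os regime. The correct relations (see \cite[Lemma~3.4]{[16]}) are
\[
a_tQ'(a_t)\;\sim\; t\sqrt{T(a_t)},\qquad Q(a_t)\;\sim\;\frac{t}{\sqrt{T(a_t)}}.
\]
If you rerun your own substitution with the correct formula, you get
\[
T(a_t)\le C\,a_t\,Q(a_t)^{\lambda-1}\sim C\,a_t\Bigl(\tfrac{t}{\sqrt{T(a_t)}}\Bigr)^{\lambda-1},
\]
hence $T(a_t)^{(\lambda+1)/2}\le C\,a_t\,t^{\lambda-1}$, and inserting $a_t\le C t^{\eta}$ from part~(1) yields exactly $T(a_t)\le C\,t^{2(\eta+\lambda-1)/(\lambda+1)}$. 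No ``balancing'' or ``optimizing the split'' is needed; the stated exponent falls out immediately once the right MRS asymptotic is used.

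This also explains your confusion about the exponents. You call $(\eta+\lambda-1)/\lambda$ ``worse'' than $2(\eta+\lambda-1)/(\lambda+1)$, but for $\lambda>1$ it is in fact \emph{smaller}. That you obtained a bound that is too strong is the telltale sign that an intermediate step was wrong --- here, the claim $a_tQ'(a_t)\sim t$, which holds only in the Freud case where $T$ is bounded.
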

\begin{remark}\label{Remark2.3} {\rm
(1) If $T(x)$ is unbounded, then for any $L>0$ we have $Q(x) \ge C|x|^L$(\cite[Lemma 3.2]{[24]})
and (\ref{eq2.2}) holds (\cite[Theorem 1.4]{[4]}).
\item[\,\,\,(2)]  (\ref{eq2.3}) holds for
\begin{equation*}
  \gamma=\frac{2(\eta+\lambda-1)}{\lambda+1}, \quad 0<\lambda<3.
\end{equation*}
\item[\,\,\,(3)]
Let us denote the MRS-number $a_n$ and the function $T(x)$
for $Q_{r,\alpha,m}$ by $a_{n,r,\alpha,m}$ and $T_{r,\alpha,m}$ respectively.
Then $T_{r,\alpha,m}(a_{n,r,\alpha,m})$ satisfies (\ref{eq2.3}).
In fact, we obtain from \cite[Proposition 4]{[23]} and \cite[Example 2 and (1.34)]{[16]},
\begin{equation*}
  a_{n,r,\alpha,m}\sim (1+\log_r^+ n)^{1/\alpha},
\end{equation*}
where
\begin{equation*}
\log_r^+ (x)=
\left\{
    \begin{array}{ll}
    \log(\log(\log(...\log x)))\,\,  (r \textrm{times}),& \textrm{ if } x>\exp_r(0),\\
    0,& \textrm{ otherwise },
    \end{array}
\right.
\end{equation*}
and for every $0< \gamma< 1$,
\begin{equation*}
  T_{r,\alpha,m}(a_{n,r,\alpha,m})\sim (1+\log_r^+n)(1+\log_{r-1}^+n)(1+\log^+n)<n^\gamma
\end{equation*}
by
\begin{equation*}
  T_{r,\alpha,m}(a_{n,r,\alpha,m})=m+T_{r,\alpha,0}(a_{n,r,\alpha,m}).
\end{equation*}
\item[\,\,\,(4)] If the weight $w=\exp(-Q)\in\mathcal{F}(C^2+)$ satisfies
\begin{equation*}
\mu=\lim\inf_{x\rightarrow \infty}\frac{Q(x)Q''(x)}{(Q'(x))^2}
=\lim\sup_{x\rightarrow \infty}\frac{Q(x)Q''(x)}{(Q'(x))^2},
\end{equation*}
then we say that the weight $w$ is regular. For the regular weight $w$ we have
\begin{equation*}
  \mu=\lim_{x\rightarrow \infty}\frac{Q(x)Q''(x)}{(Q'(x))^2}=1,
\end{equation*}
therefore, we obtain (\ref{eq2.6}) for any fixed $\eta>0$ (see \cite[Corollary 5.5]{[24]}).
We note that all of examples in Example \ref{Example1.4} are regular.
\item[\,\,\,(5)] (\ref{eq2.5}) means
\begin{equation*}
0<C\le\frac{n}{a_n}\left(\frac{T(a_n)}{a_n}\right)^{\nu-1}.
\end{equation*}
}
\end{remark}
Let
\begin{eqnarray*}
X_n(\nu,f;x) &:=&\sum_{k=1}^nf(x_{k,n})l_{kn}^{\nu}(x)
\sum_{i=0}^{\nu -2}e_i(\nu,k,n)(x-x_{k,n})^i,\\
Y_n(\nu,f;x)&:=&\sum_{k=1}^nf(x_{k,n})l_{kn}^{\nu}(x)e_{\nu-1}(\nu,k,n)(x-x_{k,n})^{\nu-1},\\
Z_n(l,\nu,f;x)&:=&\sum_{k=1}^n    \sum_{s=1}^lf^{(s)}(x_{k,n})l_{kn}^{\nu}(x)
\sum_{i=s}^{\nu-1}e_{si}(\nu,k,n)(x-x_{k,n})^i.
\end{eqnarray*}
Then we know
\begin{equation*}
  L_n(\nu,f;x)=X_n(\nu,f;x)+Y_n(\nu,f;x),
\end{equation*}
and
\begin{equation*}
  L_n(l,\nu,f;x)=L_n(\nu,f;x)+Z_n(l,\nu,f;x).
\end{equation*}
Let
\begin{equation}\label{eq2.7}
  \Phi(x):=\frac{1}{(1+Q(x))^{2/3}T(x)}.
\end{equation}
Then we see that for $0<d\le |x|$,
\[
\Phi(x)\sim  \frac{Q(x)^{\frac{1}{3}}}{xQ'(x)}.
\]
Moreover, if we define
\begin{equation}\label{eq2.8}
  \Phi_n(x):=\max\left\{\delta_n, 1-\frac{|x|}{a_n}\right\}, \quad n=1,2,3,....,
\end{equation}
then we have the following:
\begin{lemma}[{\cite[Lemma 3.4]{[7]}}]\label{Lemma2.4}
For $x\in \mathbb{R}$ we have
\begin{equation*}
  \Phi(x)\le C\Phi_n(x), n\geqslant 1.
\end{equation*}
\end{lemma}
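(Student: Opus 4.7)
The plan is to verify the inequality $\Phi(x)\le C\Phi_n(x)$ by splitting $\mathbb{R}$ into three natural regions determined by the Mhaskar--Rakhmanov--Saff number $a_n$. Since both $\Phi$ and $\Phi_n$ are even, I restrict to $x\ge 0$. Throughout I exploit the alternative form $\Phi(x)\sim Q(x)^{1/3}/(xQ'(x))$ valid on $\{|x|\ge d>0\}$, the identity $xQ'(x)=T(x)Q(x)$, the monotonicity of $Q$ on $(0,\infty)$ (from Definition~1.1(b)), and the quasi-increase of $T$ with $T\ge\Lambda>1$ (Definition~1.1(d)).

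\emph{Region I} ($0\le x\le a_n/2$). Here $\Phi_n(x)\ge 1-x/a_n\ge 1/2$, while $\Phi(x)\le 1/T(x)\le 1/\Lambda$. Thus $\Phi(x)\le (2/\Lambda)\Phi_n(x)$.

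\emph{Region II} ($x\ge a_n$). The product $(1+Q(x))^{2/3}T(x)$ is quasi-increasing on $(0,\infty)$, so $\Phi$ is quasi-decreasing and $\Phi(x)\le C\Phi(a_n)$. The task reduces to $\Phi(a_n)\le C\delta_n$. Using $\Phi(a_n)\sim Q(a_n)^{1/3}/(a_nQ'(a_n))$ and $a_nQ'(a_n)=T(a_n)Q(a_n)$, this is equivalent to the standard asymptotic $Q(a_n)\gtrsim n/\sqrt{T(a_n)}$, which is extracted from the defining integral for $a_n$ by Laplace-type analysis under the hypotheses on $Q$.

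\emph{Region III} ($a_n/2\le x\le a_n$). Here $\Phi_n(x)\sim\delta_n+(1-x/a_n)$. From $Q'(u)=T(u)Q(u)/u$ and the quasi-increase of $T$ one gets $Q(a_n)-Q(x)\le CT(a_n)Q(a_n)(1-x/a_n)$. Assuming the two-sided comparison $T(x)\sim T(a_n)$ on $[a_n/2,a_n]$, I split into subcases. If $1-x/a_n\le c/T(a_n)$, then $Q(x)\sim Q(a_n)$, hence $\Phi(x)\sim\Phi(a_n)\sim\delta_n\le\Phi_n(x)$. If $1-x/a_n>c/T(a_n)$, then simply $\Phi(x)\le C/T(x)\le C'/T(a_n)\le C''(1-x/a_n)\le C''\Phi_n(x)$.

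The main obstacle is the two-sided estimate $T(x)\sim T(a_n)$ on $[a_n/2,a_n]$ required in Region III: the quasi-increase of $T$ supplies only $T(x)\le CT(a_n)$, and the matching lower bound $T(x)\ge cT(a_n)$ must be obtained from Definition~1.1(e), which controls $(\log|Q'|)'$ by $(\log Q)'$ and thereby prevents $T$ from collapsing between $a_n/2$ and $a_n$. This doubling-type property of $T$ together with the bound $T(a_n)\le n^\gamma$ from Assumption~2.1 is what ultimately makes both subcases of Region III come out uniformly in $n$.
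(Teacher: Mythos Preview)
The paper does not prove this lemma but cites \cite[Lemma~3.4]{[7]}. Your Regions~I and~II are fine; the gap is in Region~III, subcase~2, where you assert $T(x)\ge cT(a_n)$ on $[a_n/2,a_n]$ and claim this follows from Definition~\ref{Definition1.1}(e). It does not: condition~(e) gives only $(\log Q')'\le C_1(\log Q)'$, hence after integration $T(a_n)/T(a_n/2)\le 2\bigl(Q(a_n)/Q(a_n/2)\bigr)^{C_1-1}$, and for Erd\H{o}s-type weights the right side is unbounded. Concretely, for $Q(x)=\exp_2(|x|)-e$ one has $T(x)\sim xe^x$ for large $x$, so $T(a_n)/T(a_n/2)\sim 2e^{a_n/2}\to\infty$, and your chain $\Phi(x)\le C/T(x)\le C'/T(a_n)$ collapses at the second inequality.

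The remedy is to compare $1-x/a_n$ with $1/T(x)$ directly rather than routing through $T(a_n)$. Parametrize $x=a_m$; in subcase~2 (with the threshold $c$ chosen via Lemma~\ref{Lemma3.8}(c)) one has $2m\le n$ while still $a_m\ge a_n/2$. Lemma~\ref{Lemma3.8}(c) applied at $t=m$ gives $a_{2m}-a_m\sim a_m/T(a_m)$, so $a_n-a_m\ge a_{2m}-a_m\ge c\,a_m/T(a_m)$ and therefore $1-x/a_n\ge c'/T(x)$. Combined with the trivial bound $\Phi(x)\le 1/T(x)$ this yields $\Phi(x)\le C(1-x/a_n)\le C\Phi_n(x)$, closing the argument.
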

We have a chain of results as follows.
In this section, we let $w_\rho\in \mathcal{\tilde{F}}_{\nu,\rho}(C^2+)$, $\rho\geqslant 0$,
and we suppose Assumption \ref{Assumption2.1}.
In addition, we let $\alpha>0, \Delta>-1$, $1<p<\infty$.
Let $C_f$ be a positive constant depending only on $f$, and let $C>0$ be a constant.
\begin{proposition} \label{Proposition 2.5}
Let $\nu=2,3,4,...$, and let
\begin{equation}\label{eq2.9}
  \Delta\geqslant \frac{1}{p}-\min\left\{1,\alpha\right\}.
\end{equation}
For $f\in C(\mathbb{R})$ satisfying
\begin{equation} \label{eq2.10}
|f(x)|(1+|x|)^\alpha \left\{\Phi^{-\frac{3}{4}}(x)w_\rho(x)\right\}^{\nu}\le C_f, \quad x\in \mathbb{R},
\end{equation}
we have
\begin{equation}\label{eq2.11}
  \left\|(1+|x|)^{-\Delta}\left\{\Phi^{\frac{3}{4}}(x)w(x)\left(|x|+\frac{a_n}{n}\right)^{\rho}\right\}^{\nu}
  X_n(\nu,f;x)\right\|_{L_p(\mathbb{R})}
\le C C_f.
\end{equation}
\end{proposition}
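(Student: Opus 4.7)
The plan is to establish a pointwise bound on $X_n(\nu,f;x)$ from the hypothesis \eqref{eq2.10} and then convert it into the desired weighted $L_p$ estimate by a standard Markov--Stieltjes decomposition of $\mathbb{R}$ into intervals around the nodes $x_{k,n}$. Substituting \eqref{eq2.10} at each node into the definition of $X_n$, one obtains
\begin{equation*}
|X_n(\nu,f;x)| \le C_f \sum_{k=1}^n \frac{\Phi^{3\nu/4}(x_{k,n})}{(1+|x_{k,n}|)^{\alpha}\, w_\rho^{\nu}(x_{k,n})}\, |l_{kn}(x)|^{\nu} \sum_{i=0}^{\nu-2} |e_i(\nu,k,n)|\, |x-x_{k,n}|^i.
\end{equation*}
Multiplying both sides by the weight $(1+|x|)^{-\Delta}\{\Phi^{3/4}(x)w(x)(|x|+a_n/n)^{\rho}\}^{\nu}$ places the estimate in a form in which $(|l_{kn}(x)|\,w(x)/w(x_{k,n}))^{\nu}$ appears as the $\nu$-th power of the classical weighted Lagrange fundamental, to which the standard pointwise bounds apply.

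The next step is to invoke, from \cite{[7]}, the weighted bounds on $|p_n(w_\rho^2;x)|w(x)$ and on $|p_n'(w_\rho^2;x_{k,n})|w(x_{k,n})$, together with the identity $l_{kn}(x) = p_n(x)/(p_n'(x_{k,n})(x-x_{k,n}))$, as well as the estimates $|e_i(\nu,k,n)| \lesssim |Q'(x_{k,n})|^{\nu-1-i}$ (up to $\Phi$-type corrections) proved there. Combined, these absorb the $\Phi^{3\nu/4}(x_{k,n})$ factor and transform each $|e_i(\nu,k,n)|\,|x-x_{k,n}|^i$ into a bounded quantity multiplied by $|x-x_{k,n}|^{i-(\nu-1)}$. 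Crucially, the inner sum in $X_n$ stops at $i=\nu-2$, so the overall decay rate in $|x-x_{k,n}|$ after multiplication by $|l_{kn}(x)|^\nu$ is at least quadratic, which makes the resulting integrals absolutely convergent and summable in $k$.

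With these pointwise bounds in hand, I would partition $\mathbb{R}$ into the Markov--Stieltjes intervals $I_{j,n}$ of length $\sim \varphi_n(x_{j,n})/n$ and the two tails beyond $\pm a_n$, and estimate $\|\cdot\|_{L_p}$ interval by interval, separating the diagonal contribution $k=j$ (where $|l_{jn}(x)|\sim 1$) from the off-diagonal ones (controlled by the at-least-quadratic decay above). A Gauss-quadrature-type sum then bounds the total by a constant multiple of $\sum_{k=1}^n (1+|x_{k,n}|)^{-(\alpha+\Delta)p}\varphi_n(x_{k,n})/n$, which is uniformly bounded in $n$ precisely under condition \eqref{eq2.9}. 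The main obstacle is the behavior at the origin: the factor $w_\rho^{-\nu}(x_{k,n}) = |x_{k,n}|^{-\rho\nu}w^{-\nu}(x_{k,n})$ is singular at $0$, whereas the left-hand side of \eqref{eq2.11} uses the regularized $(|x|+a_n/n)^{\rho\nu}$, so nodes in the innermost band $|x_{k,n}|\lesssim a_n/n$ must be treated separately using the sharper bounds involving $(|x_{k,n}|+a_n/n)^{\rho}|p_n'(w_\rho^2;x_{k,n})|w(x_{k,n})$ from \cite{[7]}; it is the reconciliation of these two weight forms that dictates the exponent restriction \eqref{eq2.9}.
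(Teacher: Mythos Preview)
Your outline has the right skeleton --- substitute \eqref{eq2.10} at the nodes, use the weighted bounds on $p_n$ and $p_n'$, and then integrate --- but several of the key technical ingredients are misstated and would not yield the claimed bound.

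First, the coefficient estimate you quote, $|e_i(\nu,k,n)|\lesssim |Q'(x_{k,n})|^{\nu-1-i}$, is not correct and has the growth going the wrong way: the actual bound (Lemma~\ref{Lemma3.1}) is $|e_i(\nu,k,n)|\le C\{n/(a_{2n}^2-x_{k,n}^2)^{1/2}\}^{i}$, which for central zeros behaves like $(n/a_n)^i$ and \emph{increases} with $i$. Consequently your statement that ``each $|e_i|\,|x-x_{k,n}|^i$ becomes a bounded quantity times $|x-x_{k,n}|^{i-(\nu-1)}$'' does not follow; what one actually gets, after combining $|l_{kn}(x)|^\nu$ with the correct $e_i$ bound and the estimates \eqref{eq3.6}, \eqref{eq3.5}, is a term comparable to $(1+|m-j|)^{-(\nu-i)}$, where $x_{m,n}$ is the node nearest $x$. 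This is precisely the content of Lemma~\ref{Lemma3.5}(1), which the paper imports from \cite{[7]} rather than re-deriving.

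Second, the Markov--Stieltjes intervals have length $\sim\varphi_n(x_{j,n})$, not $\varphi_n(x_{j,n})/n$ (see Lemma~\ref{Lemma3.7}(1); note $\varphi_n$ already carries the factor $1/n$). With your extra $1/n$ the Riemann-sum comparison would give an estimate that is too small by a factor of $n$.

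Third, and most importantly for the logic, condition \eqref{eq2.9} is \emph{not} driven by the origin regularization $(|x|+a_n/n)^\rho$ versus $|x|^\rho$. That reconciliation is already absorbed into the pointwise estimates \eqref{eq3.5}--\eqref{eq3.6}. The role of \eqref{eq2.9} is to control the $L_p$ integral at large $|x|$: after the pointwise bound one is left with quantities like $a_n^{-\alpha}\|(1+|x|)^{-\Delta}\|_{L_p(|x|\le a_{2n})}$ and $\|(1+|x|)^{-(\alpha+\Delta)}\|_{L_p(|x|\le a_{2n})}$, and \eqref{eq2.9} is exactly what makes these uniformly bounded in $n$.

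The paper's own proof proceeds differently from your interval-by-interval scheme: it first restricts to $|x|\le a_{2n}$ via Lemma~\ref{Lemma3.4}, then splits the node sum into $|x_{j,n}|\ge a_n/3$ and $|x_{j,n}|<a_n/3$, and for the inner zeros further separates the ``far'' contribution $|x-x_{j,n}|\ge |x|/2$ (handled directly via \eqref{eq3.8}) from the ``near'' contribution $|x-x_{j,n}|<|x|/2$ (where $|x_{j,n}|\sim|x|$ and Lemma~\ref{Lemma3.5}(1) applies). This decomposition is what makes the appearance of \eqref{eq2.9} transparent.
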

\begin{proposition}\label{Proposition2.6}
Let $\nu=2,4,6,...$ and assume that (\ref{eq2.9}) holds.
For $f\in C(\mathbb{R})$ satisfying
\begin{equation}\label{eq2.12}
|f(x)|(1+|x|)^\alpha\left\{\Phi^{-\frac{3}{4}}(x)w_\rho(x)\right\}^{\nu}
\left(|Q'(x)|+\frac{1}{|x|}\right)\le C_f, \quad  x\in \mathbb{R}\setminus \{0\}
\end{equation}
we have
\begin{equation*}
  \left\|(1+|x|)^{-\Delta}\left\{\Phi^{\frac{3}{4}}(x)w(x)\left(|x|+\frac{a_n}{n}\right)^{\rho}\right\}^{\nu}
  Y_n(\nu,f;x)\right\|_{L_p(\mathbb{R})}
\le CC_f\varepsilon_n(a_n+\log n),
\end{equation*}
where $\varepsilon_n$ is defined by (\ref{eq2.4}).
\end{proposition}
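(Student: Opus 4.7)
The plan is to reduce $Y_n(\nu,f;x)$ to a weighted Lagrange-type sum and then invoke known $L_p$-estimates, with the extra factor $|Q'(x)|+1/|x|$ appearing in (\ref{eq2.12}) precisely doing the work of absorbing the large coefficient $e_{\nu-1}(\nu,k,n)$. The small size of the final bound, which is the point of this proposition, will come from the fact that only one (odd-exponent) term survives in $Y_n$, combined with a quadrature/orthogonality-based cancellation typical of Hermite--Fej\'er interpolation.

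The first step is algebraic: using the identity $(x-x_{k,n})l_{kn}(x)=p_n(x)/p_n'(x_{k,n})$, I would rewrite
\[
 l_{kn}^\nu(x)(x-x_{k,n})^{\nu-1}=\frac{p_n(x)}{p_n'(x_{k,n})}\,l_{kn}^{\nu-1}(x),
\]
so that
\[
 Y_n(\nu,f;x)=p_n(x)\sum_{k=1}^n\frac{f(x_{k,n})\,e_{\nu-1}(\nu,k,n)}{p_n'(x_{k,n})}\,l_{kn}^{\nu-1}(x).
\]
Since $\nu$ is even, $\nu-1$ is odd; this is the structural reason the sum behaves like a Lagrange operator (rather than a Hermite--Fej\'er-type positive sum), and it is what will yield the small factor $\varepsilon_n(a_n+\log n)$.

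The second step is to control $|e_{\nu-1}(\nu,k,n)|$. From the defining recursion for the coefficients in (\ref{eq1.2}) (done explicitly in \cite{[7]}), the leading coefficient carries the logarithmic derivative of $w_\rho$, so that
\[
 |e_{\nu-1}(\nu,k,n)|\le C\bigl(|Q'(x_{k,n})|+|x_{k,n}|^{-1}\bigr),
\]
which identifies precisely the extra factor in hypothesis (\ref{eq2.12}) as the quantity needed to keep $f(x_{k,n})e_{\nu-1}(\nu,k,n)$ controlled by the same weighted size as in Proposition \ref{Proposition 2.5}. Combining (\ref{eq2.12}) with this bound yields
\[
 \Bigl|\frac{f(x_{k,n})\,e_{\nu-1}(\nu,k,n)}{p_n'(x_{k,n})}\Bigr|\le\frac{C\,C_f\,\{\Phi^{3/4}(x_{k,n})w_\rho^{-1}(x_{k,n})\}^{\nu}}{(1+|x_{k,n}|)^{\alpha}|p_n'(x_{k,n})|}.
\]

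The third step is the $L_p$ analysis. Using the Christoffel-function estimate $\lambda_{k,n}\sim \varphi_n(x_{k,n})w_\rho^2(x_{k,n})$ and the classical relation $\lambda_{k,n}|p_n'(x_{k,n})|^{-1}\sim $ spacing factors, the coefficients above are converted into genuine quadrature weights. The remaining integral
\[
 \Bigl\|(1+|x|)^{-\Delta}\bigl\{\Phi^{3/4}(x)w(x)(|x|+a_n/n)^{\rho}\bigr\}^{\nu}p_n(x)\sum_{k=1}^n a_k\,l_{kn}^{\nu-1}(x)\Bigr\|_{L_p}
\]
is then handled by established weighted $L_p$-inequalities for Lagrange-type interpolation (cf. \cite{[4]}), which in this setting produce the logarithmic factor $a_n+\log n$ for $1<p<\infty$, while the mesh-spacing factor $\varphi_n(x_{k,n})\lesssim \varepsilon_n$ near the points of interest gives the remaining $\varepsilon_n$.

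The main obstacle, as I see it, is not any single estimate but the bookkeeping: one must track the exact power of $\Phi$, the $|x|^\rho$ correction (through $(|x|+a_n/n)^\rho$), the interplay of $p_n'(x_{k,n})^{-1}$ with the Christoffel function on both ends of the support, and the two regimes of $\varepsilon_n$ in (\ref{eq2.4}) coming from the Freud/Erd\"os dichotomy. In practice this requires splitting the integration into the bulk $|x|\le a_n(1-L\delta_n)$, the endpoint region $a_n(1-L\delta_n)<|x|\le a_n$, and the tail $|x|>a_n$ (using an infinite-finite range inequality and restricted-range results from Assumption \ref{Assumption2.1}), and proving each piece separately satisfies the claimed bound.
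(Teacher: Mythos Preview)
Your proposal contains two concrete errors that make the argument fail for $\nu\ge 4$, and the third step is too vague to constitute a proof.

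\medskip
\textbf{The algebraic rewriting is incorrect.} From $(x-x_{k,n})l_{kn}(x)=p_n(x)/p_n'(x_{k,n})$ one gets
\[
 l_{kn}^{\nu}(x)(x-x_{k,n})^{\nu-1}=\Bigl(\frac{p_n(x)}{p_n'(x_{k,n})}\Bigr)^{\nu-1} l_{kn}(x),
\]
not $\dfrac{p_n(x)}{p_n'(x_{k,n})}\,l_{kn}^{\nu-1}(x)$ as you wrote. The two differ by a factor $(x-x_{k,n})^{\nu-2}$, so the reduction to a sum of the form $\sum_k a_k\,l_{kn}^{\nu-1}(x)$ with moderate $a_k$ does not exist for $\nu\ge 4$.

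\medskip
\textbf{The coefficient bound is wrong.} For $\nu$ even and $i=\nu-1$ odd, Lemma~\ref{Lemma3.2} gives
\[
 |e_{\nu-1}(\nu,k,n)|\le C\Bigl\{\frac{T(a_n)}{a_n}+|Q'(x_{k,n})|+\frac{1}{|x_{k,n}|}\Bigr\}\Bigl\{\frac{n}{a_{2n}-|x_{k,n}|}+\frac{T(a_n)}{a_n}\Bigr\}^{\nu-2},
\]
not merely $C\bigl(|Q'(x_{k,n})|+|x_{k,n}|^{-1}\bigr)$. The extra factor is of order $(n/a_n)^{\nu-2}$ in the bulk, and ignoring it makes the subsequent estimates off by exactly that power. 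The hypothesis (\ref{eq2.12}) absorbs only the first brace, not the second; the second is compensated in the paper by the $\nu$ powers of $\Phi^{3/4}w$ together with (\ref{eq3.5})--(\ref{11}), a balance your outline never sets up.

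\medskip
\textbf{The source of $\varepsilon_n$ is misidentified.} The factor $\varepsilon_n$ does not come from ``$\varphi_n(x_{k,n})\lesssim\varepsilon_n$'' (indeed $\varphi_n(x_{k,n})\sim a_n/n$ in the bulk, which need not equal $\varepsilon_n$). In the paper it emerges from the dichotomy (\ref{eq3.14}), namely $\max\{1,T(a_n)/a_n\}\cdot(a_n/n)=\varepsilon_n$, combined with the pointwise bound (\ref{eq3.3}) of Lemma~\ref{Lemma3.5}(2). Your invocation of ``established weighted $L_p$-inequalities'' to produce $a_n+\log n$ is not a proof step; the paper obtains the $\log n$ from summing $(1+|m-j|)^{-1}$ over $j$, and the $a_n$ from the separate piece $Y_{2,n}^{[1]}$.

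\medskip
\textbf{What the paper actually does.} The argument parallels the proof of Proposition~\ref{Proposition 2.5}: split $Y_n=Y_{1,n}+Y_{2,n}$ according to $|x_{j,n}|\gtrless a_n/3$; for $Y_{1,n}$ and for $Y_{2,n}$ near the diagonal use the ready-made pointwise bound (\ref{eq3.3}); for the off-diagonal piece $Y_{2,n}^{[1]}$ (where $|x-x_{j,n}|\ge|x|/2$) use Lemma~\ref{Lemma3.2} directly together with (\ref{eq3.5}), (\ref{eq3.6}), and the elementary inequalities (\ref{11})--(\ref{22}). No orthogonality cancellation or Christoffel-function identity is used.
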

\begin{proposition}\label{Proposition2.7}
Let $\nu=2,3,4,...$ and assume that (\ref{eq2.9}) holds.
For $f\in C^l(\mathbb{R})$, $0\le l\le \nu-1$ satisfying
\begin{equation}\label{eq2.13}
|f^{(s)}(x)|(1+|x|)^\alpha \left\{\Phi^{-\frac{3}{4}}(x)w_\rho(x)\right\}^{\nu}\le C_f,
\quad x\in \mathbb{R}, \,\,\, s=1,2,...,l,
\end{equation}
we have
\begin{eqnarray*}
&&\left\|(1+|x|)^{-\Delta}\left\{\Phi^{\frac{3}{4}}(x)w(x)\left(|x|+\frac{a_n}{n}\right)^{\rho}\right\}^{\nu}
Z_n(l,\nu,f;x)\right\|_{L_p(\mathbb{R})}\\
&\le& C_f\frac{a_n^2\log n}{n}
\left\{
\begin{array}{lr}
         1, & \Delta p<1; \\
         \frac{\log a_n}{a_n},& \Delta p=1;\\
         \frac{1}{a_n}, &\Delta p>1.
\end{array}
         \right.
\end{eqnarray*}
\end{proposition}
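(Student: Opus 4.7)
The plan is to reduce Proposition~\ref{Proposition2.7} to estimates of the same flavour as Propositions~\ref{Proposition 2.5}--\ref{Proposition2.6}, exploiting the single new feature of $Z_n$: every summand carries a factor $(x-x_{k,n})^{i}$ with $i\geqslant s\geqslant 1$, which is precisely the source of the extra smallness $a_n/n$ over the pure Hermite--Fej\'er case. First I would expand
\[
Z_n(l,\nu,f;x)=\sum_{s=1}^{l}\sum_{i=s}^{\nu-1}Z_n^{(s,i)}(f;x),\qquad Z_n^{(s,i)}(f;x):=\sum_{k=1}^{n}f^{(s)}(x_{k,n})\,e_{si}(\nu,k,n)\,l_{kn}^{\nu}(x)(x-x_{k,n})^{i},
\]
apply the triangle inequality in $L_p$, and treat each piece separately. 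Since extra powers of $(x-x_{k,n})$ only improve the estimate, it is enough to handle the worst pair $i=s=1$; the remaining pairs follow by the same argument with additional factors of $a_n/n$.

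Next I would quote from \cite{[7]}, where these coefficients were analysed in the course of the uniform-convergence theorems, the pointwise bounds on $e_{si}(\nu,k,n)$. Those bounds are expressed in terms of $Q^{(j)}(x_{k,n})$ and $|p'_n(w_\rho^2;x_{k,n})|$, and amount at the typical node to powers of $T(a_n)/a_n$ together with $n/a_n$. Substituting hypothesis~\eqref{eq2.13} gives
\[
|f^{(s)}(x_{k,n})|\leqslant C_f(1+|x_{k,n}|)^{-\alpha}\bigl\{\Phi^{3/4}(x_{k,n})/w_\rho(x_{k,n})\bigr\}^{\nu},
\]
which combines with the weight $\{\Phi^{3/4}(x)w(x)(|x|+a_n/n)^{\rho}\}^{\nu}$ appearing in the $L_p$ norm. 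The problem thereby reduces to estimating the $L_p$ norm of $\sum_{k}A_{k,n}|l_{kn}(x)|^{\nu}|x-x_{k,n}|$ against an innocuous weight, with $A_{k,n}$ explicit at the node. This is the regime in which the Marcinkiewicz--Zygmund and Gaussian-quadrature machinery already deployed in \cite{[7]} converts the discrete sum into an integral at the cost of the familiar $\log n$ Lebesgue-type factor.

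For the final integral estimate I would use the standard bound $|p_n(x)w(x)|\lesssim \Phi_n^{-1/4}(x)$ to cancel $w^{\nu}$ against the $|p_n|^{\nu}$ hidden in $l_{kn}^{\nu}$, absorb the node-spacing factor $a_n/n$ provided by $|x-x_{k,n}|$, and reduce the matter to an integral of the shape $\int_{-\infty}^{\infty}(1+|x|)^{-\Delta p}\,g_n(x)\,dx$ for some $g_n$ essentially concentrated on $[-a_n,a_n]$ and bounded. The three cases $\Delta p<1$, $\Delta p=1$, $\Delta p>1$ then come directly from $\int_{0}^{a_n}(1+x)^{-\Delta p}\,dx$, and the prefactor $a_n^{2}\log n/n$ combines the spacing $a_n/n$, the Lebesgue-constant factor $\log n$, and the endpoint loss $a_n$ arising from $\Phi^{-3/4}$ at zeros near $\pm a_n$. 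The main obstacle I anticipate is exactly this endpoint bookkeeping: at zeros close to $\pm a_n$ the bounds on $e_{si}$ involve compensating powers of $T(a_n)$ and $\Phi^{-1}$, and Assumption~\ref{Assumption2.1}(a) together with \eqref{eq2.2} must be deployed carefully so that these powers do not spoil the final estimate; a secondary but delicate issue is tracking the $\log a_n$ correction at the threshold $\Delta p=1$.
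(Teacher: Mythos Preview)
Your overall strategy---obtain a weighted pointwise bound and then integrate $(1+|x|)^{-\Delta}$---is the same as the paper's, but you are reconstructing by hand work that the paper has already packaged into Lemma~\ref{Lemma3.5}(3). The paper's proof of Proposition~\ref{Proposition2.7} is essentially two lines: apply \eqref{eq3.4} and bound the double sum $\sum_{j}\sum_{i=0}^{\nu-1}(1+|m-j|)^{-(\nu-i)}$ by $C\log n$ to obtain the \emph{uniform} pointwise estimate $C C_f(a_n/n)\log n$; then restrict to $|x|\le a_{2n}$ via Lemma~\ref{Lemma3.4} and compute $\|(1+|x|)^{-\Delta}\|_{L_p(|x|\le a_{2n})}$, which immediately yields the three cases. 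No Marcinkiewicz--Zygmund or quadrature machinery is used, and the extra factor $a_n$ in the final bound comes from the length of the integration interval in the case $\Delta p<1$ (using \eqref{eq2.9} to ensure $1/p-\Delta\le 1$), not from any $\Phi^{-3/4}$ endpoint loss. The endpoint bookkeeping you anticipate as the main obstacle is already absorbed into the proof of \eqref{eq3.4} in \cite{[7]}.

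There is also a genuine slip in your reduction step. Your claim that ``extra powers of $(x-x_{k,n})$ only improve the estimate'' and that the worst pair is $i=s=1$ is backwards. By Lemma~\ref{Lemma3.1} the coefficient $e_{si}$ carries a factor of order $(n/a_n)^{i-s}$, which cancels the extra powers of $|x-x_{k,n}|$ near the node but \emph{worsens} the decay for distant nodes: schematically $l_{kn}^{\nu}(x)\,e_{si}(x-x_{k,n})^{i}$ behaves like $(a_n/n)^{s}(1+|m-j|)^{-(\nu-i)}$. Thus the dominant contribution comes from $i=\nu-1$, not $i=1$, and it is precisely this term that produces the $\log n$ via $\sum_{j}(1+|m-j|)^{-1}$. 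If you handled only $i=s=1$ and claimed the rest follows with additional factors of $a_n/n$, you would miss the logarithm and underestimate the bound.
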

\begin{proposition}\label{Proposition2.8}
Let $\nu=2,3,4,...$ and assume that (\ref{eq2.9}) holds. Let $P\in \mathcal{P}_{\nu n-1}$ be fixed.
Then, we have
\begin{equation*}
\left\|(1+|x|)^{-\Delta}
\left\{\Phi^{\frac{3}{4}}(x)w(x)\left(|x|+\frac{a_n}{n}\right)^{\rho}\right\}^{\nu}
\left(L_n(\nu,P;x)-P(x)\right)
\right\|_{L_p(\mathbb{R})}
\rightarrow 0 \quad \textrm{as} \quad  n\rightarrow \infty.
\end{equation*}
\end{proposition}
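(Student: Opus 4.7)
\textbf{Reduction to Proposition \ref{Proposition2.7}.} The plan is to express $L_n(\nu,P;x)-P(x)$ in terms of the operator $Z_n$ already controlled by Proposition \ref{Proposition2.7}. Since $P\in\mathcal{P}_{\nu n-1}$, the complete Hermite interpolant recovers $P$ exactly: as noted in the introduction, $L_n(\nu-1,\nu,P;x)=P(x)$. Combining this with the defining decomposition
\[
L_n(\nu-1,\nu,P;x)=L_n(\nu,P;x)+Z_n(\nu-1,\nu,P;x),
\]
we obtain the identity
\[
L_n(\nu,P;x)-P(x)=-Z_n(\nu-1,\nu,P;x).
\]
Hence the quantity whose weighted $L_p$-norm we need to estimate is, up to a sign, exactly the object controlled by Proposition \ref{Proposition2.7} taken with $l=\nu-1$ and $f=P$.

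\textbf{Checking hypothesis (\ref{eq2.13}) for $f=P$.} For any fixed polynomial $P$ the derivatives $P^{(s)}$, $1\le s\le \nu-1$, are themselves polynomials, so $|P^{(s)}(x)|(1+|x|)^\alpha$ grows at most polynomially in $|x|$. On the other hand, from (\ref{eq2.7}),
\[
\bigl\{\Phi^{-3/4}(x)w_\rho(x)\bigr\}^{\nu}
=\bigl\{(1+Q(x))^{1/2}T(x)^{3/4}|x|^\rho e^{-Q(x)}\bigr\}^{\nu}
\]
is bounded on any bounded set (since $T$ is bounded near $0$ for weights in $\tilde{\mathcal{F}}_{\nu,\rho}(C^2+)$) and decays like $e^{-\nu Q(x)}$ at infinity, which dominates any polynomial growth because $Q\to\infty$. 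Consequently (\ref{eq2.13}) holds with a constant $C_P$ that depends on $P,\rho,\alpha,\nu$ but is independent of $n$. Applying Proposition \ref{Proposition2.7} therefore yields
\[
\left\|(1+|x|)^{-\Delta}\bigl\{\Phi^{3/4}(x)w(x)(|x|+a_n/n)^\rho\bigr\}^{\nu}(L_n(\nu,P;x)-P(x))\right\|_{L_p(\mathbb{R})}
\le C_P\,\frac{a_n^{2}\log n}{n}\, M_n,
\]
where $M_n\in\{1,\log a_n/a_n,1/a_n\}$ according to whether $\Delta p<1$, $=1$, or $>1$. In all three cases $M_n$ remains bounded (and tends to $0$ in the latter two).

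\textbf{Decay rate and closing the argument.} It remains to verify that $a_n^{2}\log n/n\to 0$. For Erd\"os-type weights this is immediate from Remark \ref{Remark2.3}(1)--(2): $a_n$ grows slower than any power $n^{\eta}$, so $a_n^2\log n/n\to 0$ trivially. For Freud-type weights Assumption \ref{Assumption2.1} gives $a_n\le C n^{1/(1+\nu-\delta)}$ with $\nu\ge 2$ and $0\le\delta<1$, hence
\[
\frac{a_n^{2}}{n}\le C\,n^{\,2/(1+\nu-\delta)-1}=C\,n^{(\delta-\nu+1)/(1+\nu-\delta)},
\]
where the exponent is strictly negative, so again $a_n^{2}\log n/n\to 0$. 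Combining this with the boundedness of $M_n$ concludes the proof.

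\textbf{Main obstacle.} There is no substantial obstacle: the proposition is essentially a corollary of Proposition \ref{Proposition2.7} once one spots the identity $L_n(\nu,P;x)-P(x)=-Z_n(\nu-1,\nu,P;x)$. The only point needing care is the quantitative verification that $a_n^{2}\log n/n\to 0$ uniformly across both Freud and Erd\"os regimes using (\ref{eq2.2}) and Remark \ref{Remark2.3}; the matching of the exponents in (\ref{eq2.13}) is routine since $P$ is fixed and the exponential weight overwhelms every polynomial factor.
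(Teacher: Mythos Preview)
Your proof is correct and follows essentially the same approach as the paper: both recognize that $L_n(\nu,P)-P=-Z_n(\nu-1,\nu,P)$ (the paper's proof drops the sign, which is harmless under the norm) and then invoke Proposition \ref{Proposition2.7} together with Remark \ref{Remark2.13}. Your additional care in explicitly verifying (\ref{eq2.13}) for the fixed polynomial $P$ and spelling out why $a_n^{2}\log n/n\to 0$ in both the Freud and Erd\"os regimes is welcome but is exactly what the paper absorbs into the reference to Remark \ref{Remark2.13}.
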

\begin{proposition}\label{Proposition2.9}
Let $\nu=2,4,6,...$ and assume that (\ref{eq2.9}) holds. Let $P\in \mathcal{P}_{\nu n-1}$ be fixed. Then, we have
\begin{equation*}
\left\|(1+|x|)^{-\Delta}\left\{\Phi^{\frac{3}{4}}(x)w(x)
\left(|x|+\frac{a_n}{n}\right)^{\rho}\right\}^{\nu}(X_n(\nu,P;x)-P(x))
\right\|_{L_p(\mathbb{R})}
\rightarrow 0 \quad \textrm{as} \quad  n\rightarrow \infty.
\end{equation*}
\end{proposition}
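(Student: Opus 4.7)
The plan is to exploit the decomposition $L_n(\nu, P; \cdot) = X_n(\nu, P; \cdot) + Y_n(\nu, P; \cdot)$ so that
\[
X_n(\nu, P; x) - P(x) = \bigl[L_n(\nu, P; x) - P(x)\bigr] - Y_n(\nu, P; x),
\]
and then to control the two pieces separately by Propositions 2.8 and 2.6. A direct application of Proposition 2.8 to the fixed polynomial $P$ gives the desired weighted $L_p$ decay of the first bracket. Since Proposition 2.6 is stated only for even $\nu$, this split automatically accounts for the parity restriction in Proposition 2.9, which is absent in Proposition 2.8.

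To handle $Y_n(\nu, P; x)$ I would apply Proposition 2.6 with $f = P$, which requires verifying the pointwise condition (2.12):
\[
|P(x)|(1+|x|)^{\alpha}\{\Phi^{-3/4}(x) w_\rho(x)\}^{\nu}(|Q'(x)| + 1/|x|) \le C_P, \quad x \in \mathbb{R}\setminus\{0\}.
\]
For large $|x|$ the factor $e^{-\nu Q(x)}$ present in $\{w_\rho\}^\nu$ dominates any polynomial growth coming from $|P|$, $(1+|x|)^\alpha$, $\Phi^{-3/4}$ and $|Q'|$, so the inequality is trivial. Near $x = 0$ one uses $Q'(0) = 0$ (evenness and $C^1$ smoothness), the fact that $Q(x) \sim c|x|^{\alpha_0}$ with $\alpha_0 > 1$ forces $T(x)$ and hence $\Phi(x)^{-1}$ to be bounded, and $w_\rho(x) \sim |x|^\rho$; after cancellations the condition reduces essentially to boundedness of $|P(x)|\,|x|^{\nu\rho - 1}$ on a neighbourhood of $0$, which holds automatically when $\nu\rho \ge 1$. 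I expect the main technical nuisance to be the borderline case $\nu\rho < 1$ with $P(0) \neq 0$; this is to be resolved by splitting $P = P(0) + x\widetilde{P}$, applying Proposition 2.6 to the vanishing piece $x\widetilde{P}$, and estimating the contribution of the constant part $Y_n(\nu, 1; x)$ directly from the explicit form of the coefficients $e_{\nu - 1}(\nu, k, n)$, which are controlled in terms of $Q'(x_{k,n})$ and $1/x_{k,n}$ at the nodes closest to the origin.

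Once (2.12) is secured, Proposition 2.6 delivers
\[
\bigl\|(1+|x|)^{-\Delta}\{\Phi^{3/4}(x) w(x)(|x| + a_n/n)^\rho\}^\nu Y_n(\nu, P; x)\bigr\|_{L_p(\mathbb{R})} \le C C_P \varepsilon_n (a_n + \log n),
\]
and the last step is to let $n \to \infty$. In the Freud-type case $\varepsilon_n = a_n/n$, and (2.2) gives $\varepsilon_n a_n = a_n^2/n \to 0$ since $1 + \nu - \delta > 2$ when $\nu \ge 2$ and $\delta < 1$. In the Erd\H{o}s-type case $\varepsilon_n = n^{-(1-\gamma)}$ with $\gamma < 1$, while $a_n$ grows at most like $n^\eta$ for any $\eta > 0$ by (2.5), so again $\varepsilon_n a_n \to 0$; the $\log n$ factor is harmless in either regime. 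Combining this with the bound from Proposition 2.8 yields the conclusion of Proposition 2.9.
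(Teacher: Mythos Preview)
Your approach is essentially the paper's: write $X_n(\nu,P)-P=\bigl(L_n(\nu,P)-P\bigr)-Y_n(\nu,P)$, invoke Proposition~2.8 for the first piece and Proposition~2.6 for the second, and conclude using $\varepsilon_n(a_n+\log n)\to 0$ (the paper records this limit in Remark~2.13 rather than rederiving it). The paper's own argument is two lines and simply cites the two propositions without checking (2.12) for $P$; your additional discussion of why (2.12) holds for a polynomial, and your flagging of the borderline $\nu\rho<1$, $P(0)\neq 0$ case, is extra care that the paper omits rather than a different strategy.
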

\begin{remark}\label{Remark2.10} {\rm
Let $f\in C(\mathbb{R})$ satisfy that for given $0<\eta<1$ and $\alpha\geqslant 0$,
\begin{equation}\label{eq2.14}
|f(x)|(1+|x|)^\alpha w^{\nu-\eta}(x)\left\{|Q'(x)|+\frac{1}{|x|}\right\} \le C_f,  \quad x\in\mathbb{R}
\quad \textrm{ and } \quad
\lim_{x\rightarrow 0}\frac{f(x)}{x}\le C_f,
\end{equation}
where $C_f$ is a constant depending only on $f$. Then,
\item[\,\,\,(1)] (\ref{eq2.10}) and (\ref{eq2.12}) hold.
\item[\,\,\,(2)] Let $f\in C^l(\mathbb{R})$ for a certain $0\le l\le \nu-1$.
Then (\ref{eq2.13}) holds.
\item[\,\,\,(3)]
Let (\ref{eq2.14}) be satisfied, then we see $f(0)=0$.
}
\end{remark}
\begin{theorem}\label{Theorem2.11}
Let $\nu=2,4,6,...$ and assume that (\ref{eq2.9}) holds. For $f(x)$ satisfying (\ref{eq2.14}) we have
\begin{equation*}
\left\|(1+|x|)^{-\Delta}\left\{\Phi^{\frac{3}{4}}(x)w(x)\left(|x|+\frac{a_n}{n}\right)^{\rho}\right\}^{\nu}
(L_n(\nu,f;x)-f(x))\right\|_{L_p(\mathbb{R})}
\rightarrow 0 \quad \textrm{as} \quad  n\rightarrow \infty.
\end{equation*}
\end{theorem}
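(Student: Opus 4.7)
The plan is to write $L_n(\nu,f;\cdot) - f$ as a sum of three pieces by inserting an auxiliary polynomial approximant $P \in \mathcal{P}_N$ with $N \leq \nu n - 1$:
\begin{equation*}
L_n(\nu,f;x) - f(x) = \bigl[X_n(\nu,f-P;x) + Y_n(\nu,f-P;x)\bigr] + \bigl[L_n(\nu,P;x) - P(x)\bigr] + \bigl[P(x) - f(x)\bigr],
\end{equation*}
using linearity of the operator and the decomposition $L_n(\nu,\cdot;x) = X_n + Y_n$. Each bracket will be handled by a result already in hand.

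For the first bracket, Remark \ref{Remark2.10} ensures that if $g := f - P$ satisfies (\ref{eq2.14}) with constant $C_g$, then $g$ satisfies (\ref{eq2.10}) and (\ref{eq2.12}) with the same $C_g$. Hence Proposition \ref{Proposition 2.5} applied to $X_n(\nu,g;\cdot)$ together with Proposition \ref{Proposition2.6} applied to $Y_n(\nu,g;\cdot)$ (where we use that $\nu$ is even) yields
\begin{equation*}
\bigl\|(\cdots)(X_n + Y_n)(\nu,g;\cdot)\bigr\|_{L_p(\mathbb{R})} \leq C\, C_g \bigl(1 + \varepsilon_n(a_n + \log n)\bigr).
\end{equation*}
By Assumption \ref{Assumption2.1}, definition (\ref{eq2.4}), and (\ref{eq2.3}), one checks that $\varepsilon_n(a_n + \log n) = o(1)$, so this bracket is at most $C'C_g$ for all large $n$. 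For the middle bracket, Proposition \ref{Proposition2.8} applied to the fixed polynomial $P$ forces $L_n(\nu,P;\cdot) - P \to 0$ in the target weighted $L_p$ norm as $n \to \infty$, once $n$ is large enough that $\deg P \leq \nu n - 1$. For the last bracket, the $L_p$ norm in question is dominated by $C\, C_g$ once one verifies that the ratio of the outer weight against the sup-weight from (\ref{eq2.14}) lies in $L_p(\mathbb{R})$; this follows from (\ref{eq2.9}) together with the rapid decay of $w^\eta$ at infinity.

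The proof is then concluded by a three-$\varepsilon$ scheme. Given $\varepsilon > 0$, first select a polynomial $P = P_m$ of sufficiently high but fixed degree $m$ so that $C_{f - P_m} < \varepsilon/(3C')$; this makes the first and third brackets each smaller than $\varepsilon/3$, uniformly in all sufficiently large $n$. Then, with $m$ fixed, take $n$ large enough for Proposition \ref{Proposition2.8} to push the middle bracket below $\varepsilon/3$.

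The principal obstacle is supplying the polynomial sequence $P_m$: one needs a Jackson-type density theorem asserting that every $f \in C(\mathbb{R})$ satisfying (\ref{eq2.14}) admits polynomial approximants $P_m$ such that $f - P_m$ again satisfies (\ref{eq2.14}) with $C_{f-P_m} \to 0$. Since (\ref{eq2.14}) only requires decay relative to $w^{\nu - \eta}$ with $\eta > 0$, the slack $w^\eta$ can absorb the auxiliary factor $|Q'(x)| + 1/|x|$, reducing matters to a standard weighted density result of Mhaskar–Lubinsky type for the class $\tilde{\mathcal{F}}_{\nu,\rho}(C^2+)$. The origin conditions $f(0) = 0$ and $f(x)/x = O(1)$ are handled by first subtracting a linear term so that $P_m(0) = 0$ and $P_m'(0)$ matches the appropriate value of $f$ at the origin, before invoking the weighted Jackson estimate.
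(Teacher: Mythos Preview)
Your approach is correct and follows the same three-$\varepsilon$ scheme as the paper, but the paper's decomposition is more economical. Rather than grouping $Y_n(\nu,f-P)$ with $X_n(\nu,f-P)$, the paper keeps $Y_n$ attached to $f$ itself:
\[
|L_n(\nu,f)-f| \le |X_n(\nu,f-P)| + |X_n(\nu,P)-P| + |P-f| + |Y_n(\nu,f)|,
\]
invoking Proposition~\ref{Proposition2.6} on $f$ (whose bound already carries the factor $\varepsilon_n(a_n+\log n)\to 0$) and Proposition~\ref{Proposition2.9} for the $X_n(\nu,P)-P$ term. The advantage is that only condition~(\ref{eq2.10}) is ever needed for $f-P$, never~(\ref{eq2.12}); the density statement used is therefore just the standard weighted-sup approximation~(\ref{eq3.16})--(\ref{eq3.17}), with a weight that is \emph{bounded} at the origin. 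This entirely sidesteps what you correctly flag as the ``principal obstacle'': manufacturing polynomials $P_m$ with $P_m(0)=0$ and $(f-P_m)(x)/x\to 0$ uniformly near~$0$. Your route works, but the extra origin analysis is avoidable simply by not splitting $Y_n$.
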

\begin{theorem}\label{Theorem2.12}
Let $\nu=2,4,6,...$ and assume that (\ref{eq2.9}) holds. For $f(x)$ satisfying (\ref{eq2.14}) and (\ref{eq2.13}), we have
\begin{equation*}
\left\|(1+|x|)^{-\Delta}\left\{\Phi^{\frac{3}{4}}(x)w(x)\left(|x|+\frac{a_n}{n}\right)^{\rho}\right\}^{\nu}
(L_n(l,\nu,f;x)-f(x))\right\|_{L_p(\mathbb{R})}
\rightarrow 0  \quad \textrm{as} \quad  n\rightarrow \infty.
\end{equation*}
\end{theorem}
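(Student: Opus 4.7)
The plan is to exploit the identity $L_n(l,\nu,f;x) = L_n(\nu,f;x) + Z_n(l,\nu,f;x)$ recorded just above Proposition~\ref{Proposition 2.5}. This yields the natural splitting
\begin{equation*}
L_n(l,\nu,f;x) - f(x) = \bigl(L_n(\nu,f;x) - f(x)\bigr) + Z_n(l,\nu,f;x),
\end{equation*}
and the triangle inequality in the weighted $L_p$-norm on the left-hand side of the theorem reduces the statement to proving that each of the two summands tends to $0$ as $n\to\infty$.

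The first summand is controlled directly by Theorem~\ref{Theorem2.11}: its hypotheses require that $\nu$ be even, that (\ref{eq2.9}) hold, and that $f$ satisfy (\ref{eq2.14}), all of which are part of the assumptions of Theorem~\ref{Theorem2.12}. For the second summand, the hypothesis (\ref{eq2.13}) on $f$ lets me invoke Proposition~\ref{Proposition2.7}, which yields a bound of the form $C_f\,(a_n^2 \log n /n)\,\kappa_n$, where $\kappa_n\in\{1,\ (\log a_n)/a_n,\ 1/a_n\}$ according to whether $\Delta p$ is $<1$, $=1$, or $>1$. Since $\kappa_n$ is uniformly bounded in every case, it suffices to verify $a_n^2\log n / n \to 0$.

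This last point is where Assumption~\ref{Assumption2.1} is used in an essential way. In the Freud case (bounded $T$), (\ref{eq2.2}) gives $a_n\le C n^{1/(1+\nu-\delta)}$; since $\nu\ge 2$ and $0\le\delta<1$, the exponent satisfies $1+\nu-\delta>2$, so $a_n^2/n\le C n^{2/(1+\nu-\delta)-1}\to 0$, with ample room to absorb the logarithmic factor. In the Erd\"os case (unbounded $T$), Lemma~\ref{Lemma2.2}(1) (equivalently Remark~\ref{Remark2.3}(1)) gives $a_n\le C(\eta)n^\eta$ for any $\eta>0$; choosing $\eta<1/2$ again yields $a_n^2\log n/n\to 0$. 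Combining the two contributions completes the proof.

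The real obstacle has already been overcome in Theorem~\ref{Theorem2.11} and Proposition~\ref{Proposition2.7}; the only verification required at this stage is the mild growth estimate on $a_n$ above, which is almost immediate from Assumption~\ref{Assumption2.1} and Remark~\ref{Remark2.3}.
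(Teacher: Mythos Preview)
Your proposal is correct and follows essentially the same route as the paper: the paper also splits $|L_n(l,\nu,f)-f|\le |L_n(\nu,f)-f|+|Z_n(l,\nu,f)|$ and invokes Theorem~\ref{Theorem2.11} and Proposition~\ref{Proposition2.7}, with the convergence of the $Z_n$ bound recorded in Remark~\ref{Remark2.13}. Your explicit verification that $a_n^2(\log n)/n\to 0$ in the Freud and Erd\H{o}s cases simply spells out what the paper leaves to that remark.
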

\begin{remark}\label{Remark2.13} {\rm
From the formulas (\ref{eq2.2}), (\ref{eq2.4}) and (\ref{eq2.5}), we have
\begin{equation*}
\varepsilon_n(a_n+\log n) \to 0 \quad \textrm{ as } \quad n \to \infty,
\end{equation*}
and
\begin{equation*}
\frac{a_n^2\log n}{n}
\begin{cases}
         1, & \Delta p<1; \\
         \frac{\log a_n}{a_n},& \Delta p=1;\\
         \frac{1}{a_n}, &\Delta p>1
\end{cases}
\to 0 \quad \textrm{ as } \quad n \to \infty.
\end{equation*}
}
\end{remark}
\setcounter{equation}{0}
\section{Lemmas and Proof of Theorems}

 For the coefficients $e_{si}(\nu,k,n) (e_i(\nu,k,n):=e_{0i}(\nu,k,n))$
in (\ref{eq1.2}) or (\ref{eq1.3}) we have the following estimates.
\begin{lemma}[{\cite[Theorem 2.6]{[3]}}]\label{Lemma3.1}
Let $w(x)=\exp(-Q(x))\in \mathcal{F}(C^2+)$. We have the following.
For each $s=0,1,...,\nu-1$ and $i=s, s+1,...,\nu-1$,
\begin{equation*}
e_0(\nu,k,n)=1, \quad  |e_{si}(\nu,k,n)|\le C\left\{\frac{n}{(a_{2n}^2-x_{k,n}^2)^{1/2}}\right\}^{i-s}.
\end{equation*}
\end{lemma}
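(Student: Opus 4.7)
The plan is to identify the coefficients $e_{si}(\nu,k,n)$ as Taylor coefficients of a reciprocal power of $l_{k,n,\rho}$ at the node $x_{k,n,\rho}$, and then to bound these through the standard zero-spacing estimates for $p_n(w_\rho^2;\cdot)$. The factor $l_{k,n,\rho}^{\nu}(x)$ vanishes to order $\nu$ at every node $x_{p,n,\rho}$ with $p\ne k$, so the interpolation conditions in (\ref{eq1.3}) at nodes $p\ne k$ are automatic for all derivative orders $j=0,\ldots,\nu-1$, and only the $\nu$ conditions at $x_{k,n,\rho}$ itself remain. Setting $\phi_k(x):=1/l_{k,n,\rho}(x)^{\nu}$ so that $\phi_k(x_{k,n,\rho})=1$, the conditions $h_{s,k,n,\rho}^{(j)}(l,\nu;x_{k,n,\rho})=\delta_{s,j}$ for $j=0,\ldots,\nu-1$ translate to the congruence
\[
\sum_{i=s}^{\nu-1} e_{si}(\nu,k,n)(x-x_{k,n,\rho})^{i} \;\equiv\; \frac{(x-x_{k,n,\rho})^{s}}{s!}\,\phi_k(x) \pmod{(x-x_{k,n,\rho})^{\nu}}.
\]
Matching coefficients yields $e_{si}(\nu,k,n)=\phi_k^{(i-s)}(x_{k,n,\rho})/(s!\,(i-s)!)$, and in particular $e_0(\nu,k,n)=\phi_k(x_{k,n,\rho})=1$.

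Next I would express $\phi_k^{(m)}(x_{k,n,\rho})$ through sums over the other nodes. From $l_{k,n,\rho}(x)=\prod_{j\ne k}(x-x_{j,n,\rho})/(x_{k,n,\rho}-x_{j,n,\rho})$ one reads off
\[
\bigl(\log l_{k,n,\rho}\bigr)^{(m)}(x_{k,n,\rho}) = (-1)^{m-1}(m-1)!\,S_m, \qquad S_m:=\sum_{j\ne k}(x_{k,n,\rho}-x_{j,n,\rho})^{-m}.
\]
Since $\phi_k=\exp(-\nu\log l_{k,n,\rho})$, Fa\`a di Bruno's formula writes $\phi_k^{(m)}(x_{k,n,\rho})$ as a universal polynomial (depending only on $\nu$ and $m$) of weighted degree $m$ in the quantities $S_1,\ldots,S_m$. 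Thus the lemma reduces to the estimate $|S_m|\le C\bigl(n/\sqrt{a_{2n}^{2}-x_{k,n,\rho}^{2}}\bigr)^{m}$ for every $m\le \nu-1$ that can appear.

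The key input is the zero-spacing for weights $w_\rho\in \mathcal{F}(C^{2}+)$ obtained in \cite{[16]}, namely $x_{k,n,\rho}-x_{k\pm 1,n,\rho}\sim \varphi_n(x_{k,n,\rho})\sim \sqrt{a_{2n}^{2}-x_{k,n,\rho}^{2}}/n$ uniformly in $k$. For $m\ge 2$ the sum $S_m$ is dominated by a geometric-type series in the gaps to successive neighbors and the target bound follows directly. The case $m=1$ is the main technical obstacle: the unsigned sum $\sum|x_{k,n,\rho}-x_{j,n,\rho}|^{-1}$ carries an extra logarithmic factor, so cancellation in the signed sum is essential. For this I would use the identity $S_1=p_n''(x_{k,n,\rho})/(2p_n'(x_{k,n,\rho}))$ together with the Markov--Bernstein-type bound $|p_n''/p_n'|(x_{k,n,\rho})\le C\,n/\sqrt{a_{2n}^{2}-x_{k,n,\rho}^{2}}$, which follows from the second-order differential equation satisfied by $p_n$ and the class-specific properties of $Q$ in $\mathcal{F}(C^{2}+)$. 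Plugging these estimates into the Fa\`a di Bruno expansion gives $|\phi_k^{(m)}(x_{k,n,\rho})|\le C\,m!\,\bigl(n/\sqrt{a_{2n}^{2}-x_{k,n,\rho}^{2}}\bigr)^{m}$, and absorbing the factorials into $C$ yields the stated bound on $|e_{si}|$.
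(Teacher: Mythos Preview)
The paper does not prove Lemma~\ref{Lemma3.1}; it simply quotes the result from \cite[Theorem~2.6]{[3]}, so there is no in-paper argument to compare against. Your sketch is the standard route used in that reference: identify $e_{si}(\nu,k,n)=\phi_k^{(i-s)}(x_{k,n})/\bigl(s!\,(i-s)!\bigr)$ with $\phi_k=l_{k,n}^{-\nu}$, expand the derivatives via Fa\`a di Bruno in terms of the power sums $S_m=\sum_{j\ne k}(x_{k,n}-x_{j,n})^{-m}$, and bound $S_1$ through $p_n''(x_{k,n})/p_n'(x_{k,n})$ while handling $S_m$, $m\ge 2$, by zero--spacing. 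This is exactly the structure of the proof in \cite{[3]}, so your plan is on target.

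One point needs tightening. You assert the two--sided equivalence
\[
\varphi_n(x_{k,n})\;\sim\;\frac{\sqrt{a_{2n}^{2}-x_{k,n}^{2}}}{n}
\]
uniformly in $k$, but this fails near the extreme zeros: for $k=1$ one has $\varphi_n(x_{1,n})\sim a_n\delta_n=a_n(nT(a_n))^{-2/3}$, whereas $\sqrt{a_{2n}^{2}-x_{1,n}^{2}}/n\sim a_n/(n\sqrt{T(a_n)})$, and these differ by a factor $\sim n^{1/3}T(a_n)^{-1/6}$. What your argument actually needs is only the one--sided inequality
\[
\varphi_n(x_{k,n})\;\ge\;c\,\frac{\sqrt{a_{2n}^{2}-x_{k,n}^{2}}}{n},
\]
and this \emph{does} hold uniformly in $k$ for $w\in\mathcal{F}(C^{2}+)$ (it is the content of the spacing lower bound in \cite{[16]} combined with $a_{2n}-|x_{k,n}|\ge c\,a_n/T(a_n)$). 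With that correction your geometric--series estimate for $S_m$, $m\ge 2$, and the differential--equation bound for $S_1$ go through, and the conclusion follows as you indicate.
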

\begin{lemma}[{\cite[Theorem 4.4 and Lemma 3.7 (3.20)]{[5]}}]\label{Lemma3.2}
Let $w_\rho\in \tilde{\mathcal{F}}_{\nu,\rho}(C^2+)$.
If $x_{kn}\neq 0$ and $|x_{k,n}|\le a_n(1+\delta_n)$,
then $e_0(\nu,k,n)=1$ and for $i=1,2,\ldots, \nu-1$,
\begin{equation*}
|e_i(\nu,k,n)|\le C\left\{\frac{T(a_n)}{a_n}+|Q'(x_{k,n})|+\frac{1}{x_{k,n}}\right\}^{\langle i \rangle}
\left\{\frac{n}{a_{2n}-|x_{k,n}|}+\frac{T(a_n)}{a_n}\right\}^{i-{\langle i \rangle}},
\end{equation*}
where
\begin{equation*}
<i>=
\begin{cases}
    1,& \textrm{ if } i \textrm{ is odd},\\
    0,& \textrm{ if } i \textrm{ is even}.
    \end{cases}
\end{equation*}
For $x_{kn}=0$, we see $e_0(\nu,k,n)=1$ and
\begin{equation*}
|e_i(\nu,k,n)|\le C\left(\frac{n}{a_n}\right)^i \quad i=1,2,\ldots, \nu-1.
\end{equation*}
\end{lemma}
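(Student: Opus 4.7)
The plan is to derive the $e_i(\nu,k,n)$ from the Taylor-type interpolation conditions that define $h_{k,n,\rho}(\nu;x)=l_{k,n,\rho}^\nu(x)\,g(x)$, where $g(x)=\sum_{i=0}^{\nu-1}e_i(x-x_{k,n})^i$. Since $l_{k,n,\rho}(x_{k,n})=1$, I would expand
\[
l_{k,n,\rho}^\nu(x)=\sum_{m\ge 0}b_m(x-x_{k,n})^m,\qquad b_0=1,
\]
so that the requirements $h_{k,n,\rho}^{(j)}(\nu;x_{k,n})=\delta_{0,j}$ for $0\le j\le\nu-1$ translate into the lower-triangular recurrence
\[
e_0=1,\qquad e_i=-\sum_{j=0}^{i-1}e_j\,b_{i-j},\quad i=1,\ldots,\nu-1.
\]
The task then reduces to bounding $b_1,\ldots,b_{\nu-1}$ with a parity structure matching the target estimate, after which an induction on $i$ using the recurrence closes the argument.

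To bound $b_m$, I would use Fa\`a di Bruno to express $m!\,b_m$ as a finite combination of products of derivatives $l_{k,n,\rho}^{(r)}(x_{k,n})$ with $\sum r=m$. Each such derivative is, up to factorial factors, equal to $p_n^{(r+1)}(x_{k,n})/p_n'(x_{k,n})$, since $l_{k,n,\rho}(x)(x-x_{k,n})\,p_n'(x_{k,n})=p_n(x)$. The structural input that produces the $|Q'|+1/|x|$ factors is the second-order identity
\[
\frac{p_n''(x_{k,n})}{p_n'(x_{k,n})}=2Q'(x_{k,n})-\frac{2\rho}{x_{k,n}},
\]
which follows from the differential relation satisfied by $p_n(w_\rho^2;\cdot)$ at its zeros. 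Differentiating this identity repeatedly and combining with the hypothesis $w_\rho\in\tilde{\mathcal{F}}_{\nu,\rho}(C^2+)$ (which both guarantees $Q\in C^\nu$ and supplies the bounds $|Q^{(i+1)}|\le C|Q^{(i)}||Q'|/Q$), together with the Markov--Bernstein and infinite--finite range inequalities for Freud/Erd\H{o}s-type weights, one obtains expansions of $p_n^{(r+1)}(x_{k,n})/p_n'(x_{k,n})$ in which each additional differentiation contributes either one factor of $|Q'(x_{k,n})|+\rho/|x_{k,n}|$ or a ``smooth'' factor of order $n/(a_{2n}-|x_{k,n}|)+T(a_n)/a_n$.

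The parity statement for the $b_m$ then reads: an odd-order Taylor coefficient of $l_{k,n,\rho}^\nu$ is controlled by one factor $|Q'|+1/|x|+T(a_n)/a_n$ times $m-1$ smooth factors, whereas an even-order coefficient requires only smooth factors. Plugging this into the recurrence, and using that $\langle i-j\rangle+\langle j\rangle\ge\langle i\rangle$ with any excess absorbable into one more smooth factor, produces the announced bound on $e_i$. The case $x_{k,n}=0$ is handled separately: by evenness of $Q$ and the $|x|^{2\rho}$ factor in $w_\rho^2$, $p_n$ has a definite parity at the origin, the putatively singular terms $1/x_{k,n}$ and $Q'(x_{k,n})$ do not appear, and only the Markov-type bound $|e_i|\le C(n/a_n)^i$ survives.

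The principal obstacle is the parity accounting in the previous step: one must verify that whenever a Fa\`a di Bruno monomial contributing to $b_m$ would produce an odd number of $|Q'|+\rho/|x|$ factors, all but one of them can be traded against smooth factors via the bounds on $Q^{(i+1)}/Q^{(i)}$ furnished by $\tilde{\mathcal{F}}_{\nu,\rho}(C^2+)$. This delicate combinatorial pairing is the precise reason the stronger class $\tilde{\mathcal{F}}_{\nu,\rho}(C^2+)$, rather than merely $\mathcal{F}(C^2+)$, is required here.
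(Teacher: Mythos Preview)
The paper does not prove this lemma; it is quoted verbatim from \cite[Theorem 4.4 and Lemma 3.7 (3.20)]{[5]}, so there is no in-paper proof to compare against. Your outline matches the architecture of that external proof: the lower-triangular recurrence $e_0=1$, $e_i=-\sum_{j<i}e_jb_{i-j}$ is standard, and the core work is indeed to bound the Taylor coefficients $b_m$ of $l_{kn}^\nu$ at $x_{k,n}$ via ratios $p_n^{(r+1)}(x_{k,n})/p_n'(x_{k,n})$, with a parity bookkeeping that isolates one factor of $|Q'(x_{k,n})|+1/|x_{k,n}|$ when $i$ is odd.

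One concrete gap: the identity you invoke,
\[
\frac{p_n''(x_{k,n})}{p_n'(x_{k,n})}=2Q'(x_{k,n})-\frac{2\rho}{x_{k,n}},
\]
is not correct as written. The differential relation for $p_n(w_\rho^2;\cdot)$ is not a pure Sturm--Liouville equation with coefficient $2Q'-2\rho/x$; rather one has a first-order system $p_n'=A_n p_{n-1}-B_n p_n$ with $A_n,B_n$ given by integral kernels of the form $\int\frac{R'(x)-R'(t)}{x-t}p_n p_m\,w_\rho^2\,dt$ (here $R=Q-\rho\log|\cdot|$). Evaluating at a zero and differentiating yields
\[
\frac{p_n''(x_{k,n})}{p_n'(x_{k,n})}=\frac{A_n'(x_{k,n})}{A_n(x_{k,n})}+\frac{p_{n-1}'(x_{k,n})}{p_{n-1}(x_{k,n})}-B_n(x_{k,n}),
\]
and it is the estimates $|B_n(x)|\le C(|Q'(x)|+1/|x|+T(a_n)/a_n)$ and $|A_n(x)|\sim n/(a_{2n}-|x|)$ (together with their derivatives up to order $\nu$) that supply both the leading $|Q'|+1/|x|$ term and the correction $T(a_n)/a_n$, as well as the ``smooth'' factor $n/(a_{2n}-|x_{k,n}|)$. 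Establishing those estimates on $A_n,B_n$ and their derivatives is precisely where the hypotheses of $\tilde{\mathcal{F}}_{\nu,\rho}(C^2+)$ enter, and constitutes the bulk of \cite{[5]}. Your sketch treats this as a single clean identity plus Markov--Bernstein, which underestimates what is required; the Markov--Bernstein route alone would give only Lemma~\ref{Lemma3.1}, not the sharper parity-sensitive bound here.
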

 We have the following.
\begin{lemma}[{\cite[Theorem 2.4]{[1]}}]\label{Lemma3.3}
Let $w_\rho\in \tilde{\mathcal{F}}_{\nu,\rho}(C^2+)$ $(\rho\geqslant 0)$, $0<p\le \infty$,
and $\beta\in \mathbb{R}$. Then given $r>1$, there exist $C, n_0, \alpha>0$
such that for $n\geqslant n_0$ and $P\in \mathcal{P}_n$,
\begin{equation*}
\left\|(Pw)(x)|x|^\beta\right\|_{L_p(a_{rn}\le |x|)}
\le \exp(-Cn^\alpha)\left\|(Pw)(x)|x|^\beta\right\|_{L_p(L\frac{a_n}{n}\le |x|\le a_n(1-L\eta_n))}.
\end{equation*}
\end{lemma}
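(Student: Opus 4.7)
The plan is to combine two classical ingredients tailored to exponential weights: an infinite-finite range inequality with exponential gain on the tails, followed by a restricted range inequality that trims off the endpoint layers and the neighborhood of the origin at only polynomial cost.

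First, I would establish a tail bound of the form
\begin{equation*}
\|(Pw)(x)|x|^\beta\|_{L_p(|x|\ge a_{rn})} \le \exp(-c_1 n^{\alpha_1})\,\|(Pw)(x)|x|^\beta\|_{L_p(|x|\le a_n)}.
\end{equation*}
This is a refined Mhaskar-Saff type estimate. The unweighted $L_\infty$ version (namely $\|Pw\|_{L_\infty(\mathbb{R})}=\|Pw\|_{L_\infty([-a_n,a_n])}$), combined with the rapid decay of $w$ beyond $a_n$, yields for $|x|\ge a_{rn}$ the pointwise bound
\begin{equation*}
|P(x)|w(x) \le \exp\bigl(-c\,(Q(a_{rn})-Q(a_n))\bigr)\,\|Pw\|_{L_\infty([-a_n,a_n])}.
\end{equation*}
Because $Q$ grows at a definite rate (Definition~1.1 together with the quantitative estimates of Lemma~2.2), the gap $Q(a_{rn})-Q(a_n)$ is at least of order $n^{\alpha_1}$ for some $\alpha_1>0$. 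The polynomial factor $|x|^\beta$ is absorbed by splitting the exponential gain via an intermediate index $r'\in(1,r)$, and the passage from $L_\infty$ to $L_p$ costs at most a power of $a_n$, which is harmless.

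Next, I would prove a restricted range inequality of the form
\begin{equation*}
\|(Pw)(x)|x|^\beta\|_{L_p(|x|\le a_n)} \le C n^{K}\,\|(Pw)(x)|x|^\beta\|_{L_p(L a_n/n\le |x|\le a_n(1-L\eta_n))}.
\end{equation*}
Here the removal of the endpoint strips $a_n(1-L\eta_n)\le|x|\le a_n$ is a standard consequence of $\eta_n$-scale Bernstein-Markov inequalities valid for weights in $\tilde{\mathcal{F}}_{\nu,\rho}(C^2+)$, while the removal of the central strip $|x|\le L a_n/n$ requires choosing $L$ large enough that the smallest zeros of $p_n(w_\rho^2;\cdot)$ lie just outside, and then comparing $|P|w|x|^\beta$ to its values on the adjacent bulk block via a Bernstein-type estimate on an interval of length $\sim a_n/n$. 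The polynomial factor $n^K$ is subsequently absorbed into the exponential gain from Step~1, producing the stated inequality with any $\alpha\in(0,\alpha_1)$ and a suitable $C>0$.

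The main obstacle is the second step, specifically the interaction of the external weight $|x|^\beta$ with the internal factor $|x|^\rho$ hidden in $w_\rho=|x|^\rho w$. When $\beta$ is negative the weighted integrand can be singular at the origin, so the central strip cannot be discarded without care; one must use sharp information on $p_n(w_\rho^2;\cdot)$ and on the location of the nodes $x_{k,n}$ for $|x|\lesssim a_n/n$. Step~1 is by contrast largely mechanical once the growth of $Q$ and the magnitude of $a_n$ are quantified via Assumption~2.1 and Lemma~2.2.
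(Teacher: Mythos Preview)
The paper does not prove this lemma; it is quoted verbatim as \cite[Theorem~2.4]{[1]} and used as a black box (see its application in the proof of Lemma~\ref{Lemma3.4}). Consequently there is no proof in the present paper to compare your proposal against.

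As an independent argument, your two-step architecture is the standard one for such inequalities and is almost certainly close in spirit to the original proof in \cite{[1]}: a Mhaskar--Saff infinite--finite range inequality giving the exponential gain on $|x|\ge a_{rn}$, followed by a restricted-range trimming of the endpoint layers and the central strip at merely polynomial cost that is then absorbed into the exponential. Your identification of the neighborhood of the origin as the delicate part---because of the possible singularity of $|x|^\beta$ and its interaction with the factor $|x|^\rho$---is correct and is indeed where the work lies; the relevant sharp local estimates would come from the Levin--Lubinsky machinery in \cite{[16]} together with the zero-spacing results such as Lemma~\ref{Lemma3.7}. If the goal is to reproduce the paper's treatment, however, the appropriate ``proof'' here is simply the citation.
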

\begin{lemma}\label{Lemma3.4} {\rm (1)}
Let $P\in \mathcal{P}_{\nu n -1}$. Then, we have
\begin{eqnarray*}
&&\left\|(1+|x|)^{-\Delta}\left\{\Phi^{\frac{3}{4}}(x)w(x)\left(|x|+\frac{a_n}{n}\right)^{\rho}\right\}^{\nu} P(x)
\right\|_{L_p(a_{2n}\le |x|)}\\
&\le& C e^{-C_1n^{\eta}}\left\|(1+|x|)^{-\Delta}\left\{\Phi^{\frac{3}{4}}(x)w(x)
\left(|x|+\frac{a_n}{n}\right)^{\rho}\right\}^{\nu}
P(x)\right\|_{L_p(|x|\le a_{2n})}.
\end{eqnarray*}
So, if
\begin{equation*}
  \left\|(1+|x|)^{-\Delta}\left\{\Phi^{\frac{3}{4}}(x)w(x)(|x|+1)^{\rho}\right\}^{\nu} P(x)
  \right\|_{L_p(\mathbb{R})}<\infty,
\end{equation*}
then we have
\begin{equation*}
\left\|(1+|x|)^{-\Delta}\left\{\Phi^{\frac{3}{4}}(x)w(x)\left(|x|+\frac{a_n}{n}\right)^{\rho}\right\}^{\nu} P(x)
\right\|_{L_p(a_{2n}\le |x|)}\rightarrow 0
\quad \textrm{as} \quad  n\rightarrow \infty.
\end{equation*}
\end{lemma}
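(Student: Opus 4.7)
My plan is to reduce the statement to the infinite-finite inequality of Lemma \ref{Lemma3.3}, applied not to $w$ but to the rescaled weight $\tilde w:=w^\nu=\exp(-\nu Q)$. Multiplying $Q$ by the positive constant $\nu$ preserves every condition in Definitions \ref{Definition1.1}--\ref{Definition1.3} (crucially $T_{\nu Q}=T_{Q}$ and the ratio bounds in (b) and (e) are invariant), so $\tilde w\in\tilde{\mathcal F}_\nu(C^2+)$. A simple change of variables in the defining integral gives $\tilde a_s=a_{s/\nu}$, hence $\tilde a_{2\nu n}=a_{2n}$ and $\tilde a_{\nu n}=a_n$. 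Writing $A(x):=(1+|x|)^{-\Delta}\{\Phi^{3/4}(x)w(x)(|x|+a_n/n)^\rho\}^\nu$ for the full weight and setting $\beta:=\nu\rho-\Delta$, I will compare $A(x)$ with the model weight $\tilde w(x)|x|^\beta$ on the tail and on the small band, and then apply Lemma \ref{Lemma3.3} to $P\in\mathcal P_{\nu n}\supset\mathcal P_{\nu n-1}$ with $r=2$.

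\textbf{Weight comparison.} On the tail $\{|x|\ge a_{2n}\}$ the estimates $1+|x|\sim|x|\sim|x|+a_n/n$ and $\Phi(x)\le C$ (from Lemma \ref{Lemma2.4}) yield $A(x)\le C\,\tilde w(x)|x|^\beta$. Let $B_n$ denote the small band $\{La_n/(\nu n)\le|x|\le a_n(1-L\tilde\eta_{\nu n})\}$ furnished by Lemma \ref{Lemma3.3} for $\tilde w$ at index $\nu n$; then $B_n\subset\{|x|\le a_n\}\subset\{|x|\le a_{2n}\}$. On $B_n$ we have $|x|+a_n/n\sim|x|$; the ratio $(1+|x|)/|x|\le 1+\nu n/(La_n)$ is at most polynomial in $n$; and $\Phi(x)\ge c\delta_n$, so $\Phi^{-3\nu/4}(x)\lesssim \delta_n^{-3\nu/4}\lesssim n^{\nu(1+\gamma)/2}$ by (\ref{eq2.3}). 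Combining these, $\tilde w(x)|x|^\beta\le Cn^M A(x)$ on $B_n$ for a fixed exponent $M=M(\Delta,\nu,\rho,\gamma)$.

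\textbf{Combining and part (2).} Chaining the two comparisons through Lemma \ref{Lemma3.3} yields
\begin{equation*}
\|AP\|_{L_p(|x|\ge a_{2n})}\le C\|\tilde wP|x|^\beta\|_{L_p(|x|\ge a_{2n})}\le Ce^{-C(\nu n)^\alpha}\|\tilde wP|x|^\beta\|_{L_p(B_n)}\le Cn^M e^{-C(\nu n)^\alpha}\|AP\|_{L_p(|x|\le a_{2n})},
\end{equation*}
and the prefactor is bounded by $e^{-C_1 n^\eta}$ for any fixed $0<\eta<\alpha$. This proves (1). For part (2), Assumption \ref{Assumption2.1}(a) (and Lemma \ref{Lemma2.2}(1) in the unbounded-$T$ case) gives $a_n/n\to 0$, so $a_n/n\le 1$ eventually and hence $(|x|+a_n/n)^\rho\le(|x|+1)^\rho$. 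The right-hand side of (1) is therefore bounded uniformly in $n$ by the hypothesized finite norm, so the left-hand side tends to $0$.

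\textbf{Main obstacle.} The delicate step is the weight comparison on $B_n$: three separate polynomial-in-$n$ losses can arise, from the ratio $(1+|x|)/|x|$ near the inner endpoint $|x|\sim a_n/n$, from $\delta_n^{-3\nu/4}$ near the outer endpoint $|x|\sim a_n$, and from replacing $|x|+a_n/n$ by $|x|$. Each must be controlled by (\ref{eq2.2}) and (\ref{eq2.3}) so that the aggregate factor $n^M$ is absorbed by the exponential decay $e^{-C(\nu n)^\alpha}$ coming from Lemma \ref{Lemma3.3}.
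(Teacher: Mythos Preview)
Your approach is correct and essentially the same as the paper's: both apply Lemma~\ref{Lemma3.3} to the rescaled weight $w^\nu=\exp(-\nu Q)$, use $a_{2n}(Q)=a_{2\nu n}(\nu Q)$ to identify the tail, and absorb the extraneous factors $(1+|x|)^{-\Delta}$, $\Phi^{3\nu/4}$, $(|x|+a_n/n)^{\nu\rho}$ at the cost of a polynomial-in-$n$ constant that is swallowed by the exponential $e^{-C(\nu n)^\alpha}$. The only cosmetic difference is that the paper packages the polynomial loss in a single step---observing that $(1+|x|)^{\Delta}\Phi(x)^{-3\nu/4}$ is quasi-increasing and evaluating it at $a_{2n}$---whereas you separate the loss into three explicit pieces (from $(1+|x|)/|x|$ near the inner endpoint, from $\Phi^{-3\nu/4}\lesssim\delta_n^{-3\nu/4}$, and from $(|x|+a_n/n)/|x|$); your version is slightly more explicit but identical in substance.
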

\begin{proof}
(1) Now, we denote the Mhaskar-Rakhmanov-Saff numbers
for the exponents $Q(x)$ and $\nu Q(x)$ by $a_n(Q)$ and $a_n(\nu Q)$ respectively.
Then, we have $a_n(Q)=a_{\nu n}(\nu Q)$.
From (\ref{eq2.7})
we note that $(1+|x|)^{\Delta}\Phi(x)^{-3\nu/4}$ is quasi-increasing.
Then applying Lemma \ref{Lemma3.3} with $w^{\nu}_{\nu\rho}(x)=\exp(-\nu Q(x))|x|^{\nu\rho}$, we have
\begin{eqnarray*}
&&\left\|(1+|x|)^{-\Delta}\left\{\Phi^{\frac{3}{4}}(x)
w(x)\left(|x|+\frac{a_n}{n}\right)^{\rho}\right\}^{\nu} P(x)\right\|_{L_p(a_{2n}(Q)\le |x|)}\\
&\le&
C\left\|w^{\nu}(x)\left(|x|+\frac{a_n}{n}\right)^{\nu\rho} P(x)\right\|_{L_p(a_{2\nu n}(\nu Q)\le |x|)}\\
&\le& C e^{-C(\nu n)^{\eta}}\left\|w^{\nu}(x)\left(|x|+\frac{a_n}{n}\right)^{\nu\rho} P(x)\right\|_{L_p(|x|\le a_{2\nu n}(\nu Q))}\\
&\le& C \frac{e^{-C\nu^{\eta} n^{\eta}}}{(1+a_{2n}(Q))^{-\Delta}\Phi(a_{2n}(Q))^{3\nu/4}}\\
&& \times\left\|(1+|x|)^{-\Delta}\left\{\Phi^{\frac{3}{4}}(x)(w(x)\left(|x|+\frac{a_n}{n}\right)^{\rho}\right\}^{\nu}
P(x)\right\|_{L_p(|x|\le a_{2n}(Q))}\\
&\le& C e^{-C_1n^{\eta}}
\left\|(1+|x|)^{-\Delta}\left\{\Phi^{\frac{3}{4}}(x)w(x)\left(|x|+\frac{a_n}{n}\right)^{\rho}\right\}^{\nu} P(x)
\right\|_{L_p(|x|\le a_{2n}(Q))}\\
&&\rightarrow 0  \quad \textrm{as} \quad  n\rightarrow \infty,
\end{eqnarray*}
because that $(1+a_{2n}(Q))^{\Delta}\Phi(a_{2n}(Q))^{-3\nu/4}$ has order $n^s$ for some $s>0$.
\end{proof}
In the rest of this section
we let $w_\rho\in \mathcal{\tilde{F}}_{\nu,\rho}(C^2+)$, $\rho\geqslant 0$,
and we suppose Assumption \ref{Assumption2.1}.
In addition, we let $\alpha>0, \Delta>-1$, $1<p<\infty$, and $C_f$ be a positive constant depending only on $f$.

To simplify the proofs of theorems we use the results of \cite{[7]}.
   In what follows we use the following notation.
Let $x_{0,n}:=x_{1,n}+\varphi_n(x_{1,n})$, and $x_{n+1,n}=-x_{0,n}$. When $|x|\le x_{0,n}$, we define
\begin{equation}\label{eq3.1}
  x_{m,n}:=x_{m(x),n}; \quad  |x-x_{m,n}|=\min_{0\le j\le n}|x-x_{j,n}|.
\end{equation}
If $|x-x_{j+1,n}|=|x-x_{j,n}|$, then we set $x_{m,n}:=x_{j,n}$.
If $x_{0,n}<x$, then we put $m=0$. And if $x<x_{n+1,n}$,
then we put $m=n+1$. Here, we note that there exists a constant $\delta>0$
such that $|x-x_{m,n}|\le \delta\varphi_n(x_{m,n})$.

\begin{lemma}[cf.{\cite{[7]}}]\label{Lemma3.5}
Let $m:=m(x)$ be defined by (\ref{eq3.1}).
\item[\,\,\,(1)] If (\ref{eq2.10}) holds, then, we have
\begin{equation}\label{eq3.2}
\left\{\Phi^{\frac{3}{4}}(x)w(x)\left(|x|+\frac{a_n}{n}\right)^{\rho}\right\}^{\nu}|X_n(\nu,f;x)|
\le C_f\sum_{j=0}^n(1+|x_{j,n}|)^{-\alpha}\sum_{i=0}^{\nu-2}\left(\frac{1}{1+|m-j|}\right)^{\nu-i}.
\end{equation}
\item[\,\,\,(2)] If (\ref{eq2.12}) holds, then, we have
\begin{equation}\label{eq3.3}
\left\{\Phi^{\frac{3}{4}}(x)w(x)\left(|x|+\frac{a_n}{n}\right)^{\rho}\right\}^{\nu}|Y_n(\nu,f;x)|
\le C_f\varepsilon_n\sum_{j=0}^n(1+|x_{j,n}|)^{-\alpha}\left(\frac{1}{1+|m-j|}\right).
\end{equation}
\item[\,\,\,(3)] If (\ref{eq2.13}) holds, then, we have
\begin{eqnarray}\label{eq3.4} \nonumber
&& \left\{\Phi^{\frac{3}{4}}(x)w(x)\left(|x|+\frac{a_n}{n}\right)^{\rho}\right\}^{\nu}|Z_n(l,\nu,f;x)|\\
&\le& C_f\frac{a_n}{n}\sum_{j=0}^n(1+|x_{j,n}|)^{-\alpha}\sum_{i=0}^{\nu-1}\left(\frac{1}{1+|m-j|}\right)^{\nu-i}.
\end{eqnarray}
\end{lemma}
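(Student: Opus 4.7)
The plan is to follow the pointwise strategy used in \cite{[7]} for the uniform estimates and adapt the computations for the $L_p$ setting. For each $k$, $s$, $i$, write
$$T_{s,k,i,n}(x) := \left\{\Phi^{3/4}(x) w(x) \left(|x|+\tfrac{a_n}{n}\right)^\rho\right\}^\nu |f^{(s)}(x_{k,n})| |l_{kn}(x)|^\nu |e_{si}(\nu,k,n)| |x-x_{k,n}|^i,$$
so that the quantities to be estimated are sums of $T_{s,k,i,n}$ over the appropriate ranges of $k,s,i$. I would estimate $T_{s,k,i,n}$ term-by-term and then index the sum by the distance-to-closest-node $|m-k|$ in the sense of (\ref{eq3.1}). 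The split $|x|\le x_{0,n}$ versus $|x|>x_{0,n}$ reduces the tail case to Lemma \ref{Lemma3.3}.

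The first step is to use the growth hypotheses (\ref{eq2.10}), (\ref{eq2.12}), (\ref{eq2.13}) to transfer the size of $f$ onto the weight at the node, giving bounds of the form
$$|f^{(s)}(x_{k,n})| \le C_f (1+|x_{k,n}|)^{-\alpha} \{\Phi^{3/4}(x_{k,n}) w_\rho(x_{k,n})\}^{-\nu},$$
with an extra divisor $(|Q'(x_{k,n})|+1/|x_{k,n}|)$ in case (\ref{eq3.3}). The second step is to invoke the weighted Lagrange estimate established pointwise in \cite{[7]},
$$\left(\frac{\Phi^{3/4}(x) w(x) (|x|+a_n/n)^\rho}{\Phi^{3/4}(x_{k,n}) w_\rho(x_{k,n})}\right)^\nu |l_{kn}(x)|^\nu \le C \left(\frac{1}{1+|m-k|}\right)^\nu, \qquad |x|\le x_{0,n}.$$
Together these replace the weighted function-times-Lagrange-power by $C_f (1+|x_{k,n}|)^{-\alpha} (1+|m-k|)^{-\nu}$.

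The third step is to control $|e_{si}(\nu,k,n)| |x-x_{k,n}|^i$. Combining Lemma \ref{Lemma3.1}/Lemma \ref{Lemma3.2} with the spacing relation $|x-x_{k,n}|\sim (1+|m-k|)\varphi_n(x_{k,n})$ and the uniform boundedness of $\varphi_n(x_{k,n})(|Q'(x_{k,n})|+1/|x_{k,n}|)$ and $\varphi_n(x_{k,n})\,n/(a_{2n}-|x_{k,n}|)$, one gets
$$|e_{si}(\nu,k,n)| |x-x_{k,n}|^i \le C (1+|m-k|)^{i-s} \cdot R_{s,i,n},$$
where $R_{s,i,n}=1$ for the $X_n$-sum, $R_{\nu-1}=\varepsilon_n$ for the $Y_n$-sum (here the parity $\langle \nu-1\rangle=1$, forced by $\nu$ even, extracts one factor $\varphi_n(x_{k,n})\lesssim\varepsilon_n$ via (\ref{eq2.4})), and $R_{s,i}=(a_n/n)^s\lesssim a_n/n$ for the $Z_n$-sum. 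Multiplying through yields precisely the bounds (\ref{eq3.2}), (\ref{eq3.3}), (\ref{eq3.4}).

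The main obstacle is the weighted Lagrange estimate in step two: translating the weight $\Phi^{3/4} w_\rho$ evaluated at the node $x_{k,n}$ into $\Phi^{3/4} w (|\cdot|+a_n/n)^\rho$ evaluated at $x$, while preserving the full $(1+|m-k|)^{-\nu}$ decay. The nontrivial ratio $((|x|+a_n/n)/(|x_{k,n}|+a_n/n))^{\nu\rho}$ must be absorbed using $\varphi_n(x_{k,n})\lesssim |x_{k,n}|+a_n/n$ together with the spacing identity, and one must also handle nodes near $\pm a_n$ where $\Phi$ degenerates and the correction $\delta_n$ in (\ref{eq2.8}) becomes active. Once this weighted-Lagrange estimate is secured (by quoting the corresponding pointwise statement in \cite{[7]} and checking the admissible exponent range), the rest of the proof is the combinatorial accounting outlined above.
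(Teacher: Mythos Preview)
Your proposal is correct and follows essentially the same route as the paper: the authors' proof consists of the single remark that one replaces $f$ by $f(x)(1+|x|)^{\alpha}$ in the pointwise arguments of \cite[Propositions~3.7--3.9]{[7]}, and your three-step outline (transfer growth of $f$ to the node via (\ref{eq2.10})--(\ref{eq2.13}); invoke the weighted Lagrange decay $(1+|m-k|)^{-\nu}$ from \cite{[7]}; control $|e_{si}||x-x_{k,n}|^i$ via Lemmas~\ref{Lemma3.1}--\ref{Lemma3.2} and spacing) is exactly what those proofs do. Two small corrections: the estimates (\ref{eq3.2})--(\ref{eq3.4}) are purely pointwise, so Lemma~\ref{Lemma3.3} plays no role here (the tail $|x|>x_{0,n}$ is handled by the convention $m=0$ or $m=n+1$); and in part~(2) the factor $\varepsilon_n$ arises not simply from $\varphi_n\lesssim\varepsilon_n$ but from the product $\varphi_n(x_{k,n})\cdot\max\{1,T(a_n)/a_n\}\lesssim\varepsilon_n$, the $\max$ coming from the ratio in (\ref{eq3.14}) after the extra divisor $|Q'(x_{k,n})|+1/|x_{k,n}|$ in (\ref{eq2.12}) cancels against the odd-parity factor in Lemma~\ref{Lemma3.2}.
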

\begin{proof}
In \cite[Propositions 3.7, 3.8 and 3.9]{[7]}
we got the similar estimations in $L_{\infty}(\mathbb{R})$-space.
Now, in there we can exchange $f(x)$ with $f(x)(1+|x|)^{\alpha}$, and then we obtain the results in $L_{p}(\mathbb{R})$
by the similar methods as the proofs of \cite[Propositions 3.7, 3.8 and 3.9]{[7]}
(see \cite[p.16-p.23]{[7]}).
\end{proof}
\begin{lemma}\label{Lemma3.6}
{\rm (1)}  {\cite[Lemma 4.3]{[7]}} Uniformly for $n \ge 1$,
\begin{equation}\label{eq3.6}
\sup_{x\in \mathbb{R}}\left|p_{n,\rho}(x)w(x)\right|
\left(|x|+\frac{a_n}{n}\right)^{\rho}\Phi^{\frac{1}{4}}(x)
\le C a_n^{-\frac{1}{2}}.
\end{equation}
\item[\,\,\,(2)]\cite[Theorem 2.5(a)]{[2]}
Uniformly for $n \ge 1$,
\begin{equation}\label{eq3.5}
  |p'_{n,\rho}w|(x_{j,n})\left(|x_{j,n}|+\frac{a_n}{n}\right)^{\rho}\sim \varphi_n(x_{j,n})^{-1}[a^2_n-x^2_{j,n}]^{-1/4}.
\end{equation}
\end{lemma}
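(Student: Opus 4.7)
Since both bounds are quoted directly from the literature---part (1) from \cite[Lemma 4.3]{[7]} and part (2) from \cite[Theorem 2.5(a)]{[2]}---my plan is to indicate the standard Levin--Lubinsky scheme by which each is established, rather than to reprove them here. The common ingredient is the sharp equivalence for the Christoffel function of $w_\rho^2$,
\[
\lambda_n(w_\rho^2;x) \sim \varphi_n(x)\,w^2(x)\left(|x|+\tfrac{a_n}{n}\right)^{2\rho},
\qquad |x|\le a_n(1+L\delta_n),
\]
which is available from Levin--Lubinsky theory for $w_\rho\in\tilde{\mathcal{F}}_{\nu,\rho}(C^2+)$; once this is in hand, both parts follow in a routine way.

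For (1), the plan is to combine the extremal characterization $p_{n,\rho}^2(x)\lesssim 1/\lambda_{n+1}(w_\rho^2;x)$ (obtained from $K_{n+1}(x,x)=\sum_{k\le n}p_{k,\rho}^2(x)\ge p_{n,\rho}^2(x)$) with the displayed equivalence to get
\[
p_{n,\rho}^2(x)\,w^2(x)\left(|x|+\tfrac{a_n}{n}\right)^{2\rho} \lesssim \varphi_n(x)^{-1}.
\]
Because $\varphi_n(x)^{-1}\lesssim (n/a_n)\,\Phi_n(x)^{-1/2} \lesssim (n/a_n)\,\Phi(x)^{-1/2}$ by Lemma \ref{Lemma2.4} and the definition (\ref{eq2.1}) of $\varphi_n$, taking square roots and transferring $\Phi^{1/4}(x)$ to the left yields the stated $a_n^{-1/2}$ bound on the bulk $|x|\le a_n(1+L\delta_n)$. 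The complementary range is disposed of by the infinite--finite range inequality in the form of Lemma \ref{Lemma3.3}, which makes $p_{n,\rho}w$ exponentially smaller there than on the bulk.

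For (2), the plan is to invoke the Christoffel--Darboux identity at the node $x_{j,n}$, namely
\[
\lambda_n(w_\rho^2;x_{j,n}) = \frac{\gamma_{n-1}/\gamma_n}{p_{n-1,\rho}(x_{j,n})\,p'_{n,\rho}(x_{j,n})},
\]
together with the known ratio $\gamma_{n-1}/\gamma_n \sim a_n/2$ and the Christoffel-function equivalence above specialised to $x=x_{j,n}$. Solving for $|p'_{n,\rho}w|(x_{j,n})(|x_{j,n}|+a_n/n)^\rho$ produces the factor $\varphi_n(x_{j,n})^{-1}$ explicitly and reduces the problem to the sharp two-sided estimate for $|p_{n-1,\rho}w|(x_{j,n})(|x_{j,n}|+a_n/n)^\rho$, whose interior asymptotic (away from the soft edge) is precisely what furnishes the extra factor $[a_n^2-x_{j,n}^2]^{-1/4}$ appearing in (\ref{eq3.5}).

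The main obstacle is the uniform tracking of the perturbation $(|x|+a_n/n)^\rho$ coming from the singular factor $|x|^\rho$ in $w_\rho$; this is what forces one to work with the Christoffel function of $w_\rho^2$ rather than of $w^2$, and the delicate point is the behaviour near the origin when $\rho>0$, where the $a_n/n$ cutoff in $(|x|+a_n/n)^\rho$ absorbs the coalescence of the innermost nodes toward $0$. For $\rho=0$ both estimates collapse to the classical Freud/Erd\H{o}s-type results, so the real content of the lemma is the uniform dependence on $\rho$.
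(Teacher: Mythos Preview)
The paper itself gives no proof of Lemma~\ref{Lemma3.6}; both parts are simply imported from the cited references, so there is no argument in the paper to compare against. Your sketch therefore goes beyond what the paper does, and the question is only whether your outline is sound.

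For part~(2) your plan is the standard one and is fine: the Gauss--quadrature identity $\lambda_n(w_\rho^2;x_{j,n})=b_n\big/\big(p_{n-1,\rho}(x_{j,n})\,p'_{n,\rho}(x_{j,n})\big)$ together with the Christoffel-function equivalence and the sharp two-sided bound for $|p_{n-1,\rho}w|(x_{j,n})(|x_{j,n}|+a_n/n)^\rho$ does produce (\ref{eq3.5}). You correctly flag that the last ingredient is exactly where the work lies.

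For part~(1), however, there is a genuine gap. The trivial inequality $p_{n,\rho}^2(x)\le K_{n+1}(x,x)=1/\lambda_{n+1}(w_\rho^2;x)$ combined with $\lambda_n\sim\varphi_n w^2(|x|+a_n/n)^{2\rho}$ gives only
\[
\Big(|p_{n,\rho}w|(x)\big(|x|+\tfrac{a_n}{n}\big)^{\rho}\Big)^2\;\lesssim\;\varphi_n(x)^{-1}\;\lesssim\;\frac{n}{a_n}\,\Phi_n(x)^{-1/2},
\]
and after taking square roots this yields the bound $C(n/a_n)^{1/2}$, not $Ca_n^{-1/2}$. You have lost a full factor of $\sqrt{n}$: the estimate $p_n^2\le\sum_{k\le n}p_k^2$ throws away the oscillation of $p_n$ and effectively averages over $n$ terms of comparable size. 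The sharp bound (\ref{eq3.6}) is a much finer statement about the single polynomial $p_{n,\rho}$ and cannot be recovered from the Christoffel function alone; in \cite{[2]} and \cite{[16]} it is obtained by a different mechanism (restricted-range inequalities plus a bound on $|p_n w|$ coming from an associated differential inequality / Korous-type argument, together with a separate lower bound on $|p_{n-1}w|$ at the zeros). So your outline for (1) would need to be replaced by a reference to those deeper pointwise bounds rather than the extremal characterization of $\lambda_n$.
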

\begin{lemma}\label{Lemma3.7}
Let $w(x)=\exp(-Q(x))\in \mathcal{F}(C^2+)$. For the zeros $x_{j,n}=x_{j,n,\rho}$, we have the following.
\item[\,\,\,(1)] {\rm {\cite[Theorem 2.2, (b)]{[2]}}} For $n\geqslant 1$ and $1\le j\le n-1$,
\begin{equation*}
x_{j,n}-x_{j+1,n}\sim \varphi_{n}(x_{j,n}),
\end{equation*}
and {\rm \cite[Lemma A.1 (A.3)]{[2]}}
\begin{equation*}
 \varphi_{n}(x_{j,n})\sim \varphi_n(x_{j+1,n}).
\end{equation*}
\item[\,\,\,(2)] {\rm {\cite[Theorem 2.2, (a)]{[2]}}} For the minimum positive zero $x_{[n/2],n}$ ($[n/2]$ is the largest integer $\le n/2$), we have
\begin{equation*}
 x_{[n/2],n}\sim a_n n^{-1},
\end{equation*}
and for large enough $n$,
\begin{equation*}
 1-\frac{x_{1,n}}{a_{n}} \sim \delta_{n}.
\end{equation*}
\item[\,\,\,(3)]{\rm \cite[Lemma 4.7]{[2]}}
\begin{equation*}
b_n=\frac{\gamma_{n-1}}{\gamma_n} \sim a_n \sim x_{1,n}.
\end{equation*}
\end{lemma}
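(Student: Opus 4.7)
The proof strategy is to recognize that each of the three statements is an established result for orthonormal polynomials associated to weights $w_\rho^2$ with $w = \exp(-Q) \in \mathcal{F}(C^2+)$, and to invoke the precise references from [2]. Since $w_\rho \in \tilde{\mathcal{F}}_{\nu,\rho}(C^2+)$ by Assumption \ref{Assumption2.1}, the hypotheses of [2] are met, so every estimate there applies verbatim to our zeros $x_{j,n} = x_{j,n,\rho}$.

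For part (1), the zero-spacing relation $x_{j,n} - x_{j+1,n} \sim \varphi_n(x_{j,n})$ is exactly the content of [2, Theorem 2.2(b)]. Its proof in [2] proceeds via Markov--Bernstein-type inequalities at the zeros, combined with the known asymptotic density of zeros given by the equilibrium measure of $Q$ on $[-a_n, a_n]$, a density which is comparable to $1/\varphi_n$. The comparability $\varphi_n(x_{j,n}) \sim \varphi_n(x_{j+1,n})$ is [2, Lemma A.1 (A.3)] and follows at once from the explicit form (\ref{eq2.1}) of $\varphi_n$, using its smoothness and piecewise monotonicity in $|x|$ on $[0,a_n]$ together with the spacing estimate from the first half of part (1).

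For part (2), the bound $x_{[n/2],n} \sim a_n n^{-1}$ on the smallest positive zero and the gap estimate $1 - x_{1,n}/a_n \sim \delta_n$ for the largest zero are both quoted from [2, Theorem 2.2(a)]. The first comes from summing the zero spacings of part (1) near the origin, using that $\varphi_n(0) \sim a_n/n$; the second reflects the fact that the outermost zero sits at distance $\sim \delta_n a_n$ from the Mhaskar--Rakhmanov--Saff endpoint, a universal soft-edge feature for weights in $\mathcal{F}(C^2+)$. Part (3) identifies the leading-coefficient ratio $b_n = \gamma_{n-1}/\gamma_n$ with $a_n$ and $x_{1,n}$ up to multiplicative constants; this is precisely [2, Lemma 4.7], whose proof uses the three-term recurrence together with the known Mhaskar--Saff behavior of the orthonormal polynomials.

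If one wished to reprove everything from scratch instead of citing [2], the main obstacle would be the precise extreme-zero estimate $1 - x_{1,n}/a_n \sim \delta_n$ in part (2), since it requires a delicate potential-theoretic analysis with the $\delta_n$-correction term at the soft edge; all other parts reduce, once this is in hand, to straightforward manipulations of $\varphi_n$ and the recurrence. Since [2] supplies this sharp edge asymptotic within our weight class, the proof of Lemma \ref{Lemma3.7} reduces to collecting the cited results.
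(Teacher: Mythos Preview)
Your proposal is correct and matches the paper's approach: Lemma \ref{Lemma3.7} is stated in the paper without proof, each part being a direct citation of the indicated results in \cite{[2]}. Your additional sketch of the ideas behind those results in \cite{[2]} is extra commentary, but the essential content---that the lemma is a compilation of known estimates requiring no new argument here---is exactly what the paper does.
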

\begin{lemma}\label{Lemma3.8}
Let $w=\exp(-Q)\in \mathcal{F}(C^2+)$. Let $L>0$ be fixed. Then we have the following.
\item[\,\,\,(a)] {\rm (\cite[Lemma 3.5, (a)]{[16]})} Uniformly for $t>0$,
\begin{equation*}
a_{Lt}\sim a_t.
\end{equation*}
\item[\,\,\,(b)] {\rm (\cite[Lemma 3.5, (b)]{[16]})} Uniformly for $t>0$,
\begin{equation*}
Q^{(j)}(a_{Lt})\sim Q^{(j)}(a_t),  \quad j=0,1.
\end{equation*}
Moreover,
\begin{equation*}
T(a_{Lt})\sim T(a_t).
\end{equation*}
\item[\,\,\,(c)] {\rm (\cite[Lemma 3.11 (3.52)]{[16]})} Uniformly for $t>0$,
\begin{equation*}
\left|1-\frac{a_{Lt}}{a_t}\right|\sim \frac{1}{T(a_t)}.
\end{equation*}
\end{lemma}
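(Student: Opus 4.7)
Since parts (a), (b), and (c) are standard properties of Mhaskar--Rakhmanov--Saff (MRS) numbers for weights in $\mathcal{F}(C^2+)$, the backbone of any proof is the defining integral equation
\[
t = \frac{2}{\pi}\int_0^1 \frac{a_tu\,Q'(a_tu)}{\sqrt{1-u^2}}\,du = \frac{2}{\pi}\int_0^{a_t} \frac{sQ'(s)}{\sqrt{a_t^2-s^2}}\,ds,
\]
combined with the structural hypotheses in Definition \ref{Definition1.1}, in particular that $T(x)\ge \Lambda>1$ is quasi-increasing and that $Q''/|Q'|\le C_1 |Q'|/Q$ a.e. Together these imply that $xQ'(x)=Q(x)T(x)$ has at least power growth $x^\Lambda$ and that $Q'$ cannot oscillate faster than $Q$.

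For (a), the plan is first to observe from the integral equation that $t\mapsto a_t$ is strictly increasing and continuous. Dividing the two defining relations for $a_t$ and $a_{Lt}$, and using the quasi-monotonicity of $T$ to pull rough constants out of the integral, one obtains $t\sim a_tQ'(a_t)/T(a_t)$, so that
\[
L \;\sim\; \frac{a_{Lt}Q'(a_{Lt})}{a_tQ'(a_t)}\cdot\frac{T(a_t)}{T(a_{Lt})}.
\]
Because $xQ'(x)$ grows at least like $x^\Lambda$ with $\Lambda>1$ and $T$ is quasi-monotone, this pins $a_{Lt}/a_t$ between two constants depending only on $L$ and on the weight.

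Once (a) is available, (b) reduces to a mean-value argument on the interval whose endpoints are $a_t$ and $a_{Lt}$. Condition (e) of Definition \ref{Definition1.1} gives $(\log |Q'|)'\le C_1(\log Q)'$ a.e., which integrated yields $|Q'(a_{Lt})|/|Q'(a_t)|\le (Q(a_{Lt})/Q(a_t))^{C_1}$; combined with the ratio for $Q$ itself (obtained from $T\ge\Lambda$ together with (a)), this produces $Q^{(j)}(a_{Lt})\sim Q^{(j)}(a_t)$ for $j=0,1$, from which $T(a_{Lt})\sim T(a_t)$ is immediate.

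The hardest point is (c). The plan is to differentiate the defining equation implicitly with respect to $t$, which gives
\[
\frac{da_t}{dt}=\frac{\pi/2}{\displaystyle\int_0^1 \frac{u\bigl[Q'(a_tu)+a_tu\,Q''(a_tu)\bigr]}{\sqrt{1-u^2}}\,du}.
\]
Using condition (e) in the form $sQ''(s)\lesssim Q'(s)T(s)$, and using (b) at each intermediate scale to control $T(a_tu)$, one shows the denominator is comparable to $tT(a_t)/a_t$, yielding $da_t/dt\sim a_t/(tT(a_t))$. Integrating from $t$ to $Lt$ and invoking $T(a_s)\sim T(a_t)$ throughout that range produces $|1-a_{Lt}/a_t|\sim (\log L)/T(a_t)\sim 1/T(a_t)$. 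The main obstacle is uniform control of the denominator integral, whose integrand is dominated near $u=1$ by the rapidly growing values of $Q'$ and $Q''$; one must check that the implicit constants depend only on $L$ and the weight class $\mathcal{F}(C^2+)$, not on $t$.
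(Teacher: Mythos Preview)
The paper gives no proof of Lemma~\ref{Lemma3.8}: all three parts are quoted verbatim from Levin and Lubinsky's monograph~\cite{[16]} (their Lemma~3.5(a), Lemma~3.5(b), and equation~(3.52) in Lemma~3.11), with explicit citations in the statement, and are used as black boxes. There is therefore nothing in the present paper to compare your proposal against; a genuine comparison would have to be made with the arguments in~\cite{[16]}, which are organised around the intermediate estimate $t\sim Q(a_t)\sqrt{T(a_t)}$ (their Lemma~3.4) rather than a direct implicit differentiation.

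On the merits of your sketch itself: parts~(a) and~(b) are outlined along standard lines and are essentially correct. In part~(c), however, there is a gap. You invoke only condition~(e) of Definition~\ref{Definition1.1}, namely $sQ''(s)\le C_1 T(s)Q'(s)$, and claim this makes the denominator integral \emph{comparable} to $tT(a_t)/a_t$. In fact condition~(e) alone gives only the upper bound on the denominator, hence only the lower bound $da_t/dt\gtrsim a_t/(tT(a_t))$, which after integration yields only $|1-a_{Lt}/a_t|\gtrsim 1/T(a_t)$. The matching upper bound requires a \emph{lower} estimate $sQ''(s)\gtrsim T(s)Q'(s)$, and this is precisely the extra hypothesis that distinguishes $\mathcal{F}(C^2+)$ from $\mathcal{F}(C^2)$ in Definition~\ref{Definition1.1}; moreover it is only assumed outside a compact interval $J$, so one must also argue that the denominator integral is dominated by the range of $u$ near $1$ (where $a_tu\notin J$ for large $t$) and that $T(a_tu)\sim T(a_t)$ there. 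Your last sentence acknowledges the difficulty but does not supply this half of the argument.
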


   From Lemma \ref{Lemma3.6} it is sufficient that
   we estimate Proposition \ref{Proposition 2.5},  \ref{Proposition2.6} or \ref{Proposition2.7} for only $|x|\le a_{2n}$.
\begin{proof}[Proof of Proposition \ref{Proposition 2.5}]
First we set
\begin{eqnarray*}
X_n(\nu,f;x)&=&\sum_{j=1}^nf(x_{j,n})l_{jn}^{\nu}(x)\sum_{i=0}^{\nu -2}e_i(\nu,j,n)(x-x_{j,n})^i\\
&=&:\sum_{j\,\,;|x_{j,n}|\geqslant \frac{a_n}{3}}
+\sum_{j\,\,;|x_{j,n}|< \frac{a_n}{3}}=:X_{1,n}(\nu,f;x) +X_{2,n}(\nu,f;x).
\end{eqnarray*}
Using (\ref{eq3.2}), we see
\begin{eqnarray*}
&&\left|(1+|x|)^{-\Delta}\left\{\Phi^{\frac{3}{4}}(x)w(x)\left(|x|+\frac{a_n}{n}\right)^{\rho}\right\}^{\nu} X_{1,n}(\nu,f;x)\right|\\
&\le& C_f(1+|x|)^{-\Delta}\sum_{|x_{j,n}|\geqslant \frac{1}{3}a_n}(1+|x_{j,n}|)^{-\alpha}
\sum_{i=0}^{\nu-2}\left(\frac{1}{1+|m-j|}\right)^{\nu-i}\\
&\le& CC_fa_n^{-\alpha}(1+|x|)^{-\Delta}.
\end{eqnarray*}
Therefore, we have by (\ref{eq2.9})
\begin{eqnarray}\label{3.9} \nonumber
&&\left\|(1+|x|)^{-\Delta}\left\{\Phi^{\frac{3}{4}}(x)w(x)\left(|x|+\frac{a_n}{n}\right)^{\rho}\right\}^{\nu}
X_{1,n}(\nu,f;x)\right\|_{L_p(|x|\le a_{2n})}\\ \nonumber
&\le& C_fa_n^{-\alpha}\left\|(1+|x|)^{-\Delta}\right\|_{L_p(|x|\le a_{2n})}\\\nonumber
&\le& C_fa_n^{-\alpha}
\left\{
\begin{array}{ll}
         a_n^{\frac{1}{p}-\Delta}, & \Delta p<1, \\
         \log a_n, & \Delta p=1,\\
         1, & \Delta p>1
\end{array}
         \right.
\le C_f.
\end{eqnarray}
Now, we will estimate for $X_{2,n}(\nu,f,x)$.
For $|x|<1$, since  we have from (\ref{eq3.2})
\begin{equation*}
   \left\{\Phi^{\frac{3}{4}}(x)w(x)\left(|x|+\frac{a_n}{n}\right)^{\rho}\right\}^{\nu}|X_{2,n}(\nu,f;x)|\le C_f,
\end{equation*}
we see
\begin{equation}\label{eq3.7}
\left\|(1+|x|)^{-\Delta}\left\{\Phi^{\frac{3}{4}}(x)w(x)\left(|x|+\frac{a_n}{n}\right)^{\rho}\right\}^{\nu}
|X_{2,n}(\nu,f;x)|\right\|_{L_p(|x|\le 1)}\\
\le C_f.
\end{equation}
Let $|x| \ge 1$. We divide it into two sums as follows:
\begin{eqnarray*}
X_{2,n}(\nu,f;x) &=:&   \sum_{\substack{j;\,\,|x_{j,n}|< \frac{a_n}{3}, \\
|x-{x_{j,n}}|\geqslant \frac{|x|}{2}}}
+\sum_{\substack{j;\,\,|x_{jn}|< \frac{a_n}{3},\\ |x-{x_{j,n}}|< \frac{|x|}{2}}}
=: X_{2,n}^{[1]}(\nu,f;x) +X_{2,n}^{[2]}(\nu,f;x).
\end{eqnarray*}
Using (\ref{eq2.10}), Lemma \ref{Lemma3.7}(2), (\ref{eq3.6}) and (\ref{eq3.5}) with $\varphi_n(x_{j,n})\sim a_n/n$,
we see that
\begin{eqnarray}\label{eq3.8}
&&\left\{\Phi^{\frac{3}{4}}(x)w(x)\left(|x|+\frac{a_n}{n}\right)^{\rho}\right\}^{\nu}
\left|f(x_{j,n})l_{jn}^{\nu}(x)\right|\\  \nonumber
&\le& C_f
(1+|x_{j,n}|)^{-\alpha}
\left|\frac{\left\{\Phi^{\frac{3}{4}}(x_{j,n})\Phi^{\frac{3}{4}}(x)w(x)\left(|x|+\frac{a_n}{n}\right)^{\rho}\right\}
 p_n(x)}{p'_n(x_{j,n})w(x_{j,n})(\frac{a_n}{n}+|x_{j,n}|)^{\rho}}\right|^{\nu}\\ \nonumber
 &\le& CC_f (1+|x_{j,n}|)^{-\alpha}\left(\frac{a_n}{n}\right)^{\nu}
\end{eqnarray}
and Lemma \ref{Lemma3.1}
\begin{equation*}
\sum_{i=0}^{\nu -2}e_i(\nu,j,n)(x-x_{j,n})^i \le C\sum_{i=0}^{\nu -2}\left(\frac{n}{a_n}\right)^i|x|^{i-\nu}
\le C\left(\frac{n}{a_n}\right)^{\nu-2}|x|^{-2}.
\end{equation*}
Then, we have
\begin{eqnarray*}
&&\left|\left\{\Phi^{\frac{3}{4}}(x)w(x)\left(|x|+\frac{a_n}{n}\right)^{\rho}\right\}^{\nu}
X_{2,n}^{[1]}(\nu,f;x)\right|\\
&\le& C_f
\sum_{\substack{j\,\,;|x_{j,n}|< \frac{a_n}{3}, \\ |x-{x_{j,n}}|\geqslant \frac{|x|}{2}}}
(1+|x_{j,n}|)^{-\alpha}\left(\frac{a_n}{n}\right)^{2}
|x|^{-2}
\le CC_f\frac{a_n^2}{n}\frac{1}{x^2}.
\end{eqnarray*}
Therefore, we have, using $\Delta>-1$,
\begin{eqnarray}\label{eq3.9} \nonumber
&&\left\|(1+|x|)^{-\Delta}\left\{\Phi^{\frac{3}{4}}(x)w(x)\left(|x|+\frac{a_n}{n}\right)^{\rho}\right\}^{\nu}
X_{2,n}^{[1]}(\nu,f;x)\right\|_{L_p(1\le|x|\le a_{2n})}\\
&\le& C_f\frac{a_n^2}{n} \left\|(1+|x|)^{-(\Delta+2)}\right\|_{L_p(1\le|x|\le a_{2n})}
\le CC_f\frac{a_n^2}{n}\le CC_f
\end{eqnarray}
(note (\ref{eq2.2}) and (\ref{eq2.5})).
Since, for $|x-x_{j,n}| < \frac{|x|}{2}$,
we see  $|x_{j,n}|\sim |x|$,  the inequality (\ref{eq3.2}) implies
\begin{eqnarray}\label{eq3.10}
&&\left\{\Phi^{\frac{3}{4}}(x)w(x)\left(|x|+\frac{a_n}{n}\right)^{\rho}\right\}^{\nu}
X_{2,n}^{[2]}(\nu,f;x)\\  \nonumber
&\le& CC_f
\sum_{\substack{ j;\,\, |x_{j,n}|< \frac{a_n}{3},\\|x-{x_{j,n}}|< \frac{|x|}{2}}}(1+|x_{j,n}|)^{-\alpha}
\sum_{i=0}^{\nu-2}\left(\frac{1}{1+|m-j|}\right)^{\nu-i}
\le CC_f(1+|x|)^{-\alpha}.
\end{eqnarray}
Therefore, we obtain  by (\ref{eq2.9})
\begin{eqnarray}\label{eq3.11}
&&\quad \quad \left\|(1+|x|)^{-\Delta}\left\{\Phi^{\frac{3}{4}}(x)w(x)\left(|x|+\frac{a_n}{n}\right)^{\rho}\right\}^{\nu}
X_{2,n}^{[2]}(\nu,f;x)\right\|_{L_p(1\le |x|\le a_{2n})}\\ \nonumber
&\le& CC_f\left\|(1+|x|)^{-(\alpha+\Delta)}\right\|_{L_p(|x|\le a_{2n})}
\le CC_fa_n^{\frac{1}{p}-(\alpha+\Delta)}\le CC_f.
\end{eqnarray}
Hence, from (\ref{3.9}), (\ref{eq3.8}), (\ref{eq3.9}), (\ref{eq3.11}) and Lemma \ref{Lemma3.4} we conclude
(\ref{eq2.11})
\end{proof}
\begin{proof}[Proof of Proposition \ref{Proposition2.6}]
We repeat the methods of the proof of Proposition \ref{Proposition 2.5}.
\begin{equation*}
Y_n(\nu,f;x)=:\sum_{j;|x_{j,n}|\geqslant \frac{a_n}{3}}
+\sum_{j;|x_{j,n}|< \frac{a_n}{3}}=:Y_{1,n}(\nu,f;x) +Y_{2,n}(\nu,f;x).
\end{equation*}
Then we have by (\ref{eq3.3}), (\ref{eq2.9}), (\ref{eq2.2}) and (\ref{eq2.5})
\begin{eqnarray}\label{3.13} \nonumber
&&\left\|(1+|x|)^{-\Delta}\left\{\Phi^{\frac{3}{4}}(x)w(x)\left(|x|+\frac{a_n}{n}\right)^{\rho}\right\}^{\nu}
Y_{1,n}(\nu,f;x)\right\|_{L_p(|x|\le a_{2n})}\\\nonumber
&\le& C_fa_n^{-\alpha}\varepsilon_n\log n
\left\|(1+|x|)^{-\Delta}\right\|_{L_p(|x|\le a_{2n})}\\\nonumber
&\le& CC_fa_n^{-\alpha}\varepsilon_n\log n
\begin{cases}
         a_n^{\frac{1}{p}-\Delta},& \Delta p<1; \\
         \log a_n,& \Delta p=1;\\
         1, &\Delta p>1
\end{cases}\\  \nonumber
&\le& CC_f\varepsilon_n\log n
\begin{cases}
         1, & \Delta p<1; \\
         \frac{\log a_n}{a_n},& \Delta p=1;\\
         \frac{1}{a_n}, &\Delta p>1.
\end{cases}
\end{eqnarray}
Now, we estimate $Y_{2,n}(\nu,f;x)$. For $|x|\le 1$ we obtain that from (\ref{eq3.3}),
\begin{equation*}
\left\|(1+|x|)^{-\Delta}\left\{\Phi^{\frac{3}{4}}(x)w(x)\left(|x|+\frac{a_n}{n}\right)^{\rho}\right\}^{\nu} Y_{2,n}(\nu,f;x)
\right\|_{L_p(|x|\le 1)}
\le CC_f\varepsilon_n \log n.
\end{equation*}
Let $|x|\geqslant 1$ and
\begin{equation*}
Y_{2,n}(\nu,f;x)=:\sum_{\substack{j\,\,;|x_{j,n}|< \frac{a_n}{3},\\|x-{x_{j,n}}|\geqslant \frac{|x|}{2}>0}}
+\sum_{\substack{j\,\,;|x_{j,n}|< \frac{a_n}{3},\\|x-{x_{j,n}}|< \frac{|x|}{2}}}=:Y_{2,n}^{[1]}+Y_{2,n}^{[2]}.
\end{equation*}
Then we have similarly to  (\ref{eq3.10}) and (\ref{eq3.11}),
\begin{eqnarray}\nonumber
&&\left\|(1+|x|)^{-\Delta}\left\{\Phi^{\frac{3}{4}}(x)w(x)\left(|x|+\frac{a_n}{n}\right)^{\rho}\right\}^{\nu}
Y_{2,n}^{[2]}(\nu,f;x)\right\|_{L_p(1\le |x|\le a_{2n})}\\\nonumber
&\le& CC_f\varepsilon_n\log n \left\|(1+|x|)^{-(\alpha+\Delta)}\right\|_{L_p(|x|\le a_{2n})}
\le CC_fa_n^{\frac{1}{p}-(\alpha+\Delta)}\varepsilon_n\log n\le CC_f\varepsilon_n\log n
\end{eqnarray}
(see (\ref{eq2.9})).
Finally, we estimate $Y_{2,n}^{[1]}$.
Let $|x_{j,n}|< \frac{a_n}{3}$ and $|x-{x_{j,n}}|\geqslant \frac{|x|}{2}$.
Then, from Lemma \ref{Lemma3.2}, (\ref{eq2.12}), (\ref{eq3.6}) and (\ref{eq3.5}), we have
\begin{eqnarray}\label{eq3.12}
&&\left\{\Phi^{\frac{3}{4}}(x)w(x)\left(|x|+\frac{a_n}{n}\right)^{\rho}\right\}^{\nu}
\left|f(x_{j,n})l_{jn}^{\nu}(x)\right||x-x_{j,n}|^{\nu-1}e_{\nu-1}(\nu,k,n)\\  \nonumber
&\le& CC_f
(1+|x_{j,n}|)^{-\alpha}\left\{|Q'(x_{j,n})|+\frac{1}{|x_{j,n}|}\right\}^{-1}\\ \nonumber
&& \quad \times
\left|\frac{\left\{\Phi^{\frac{3}{4}}(x_{j,n})\Phi^{\frac{3}{4}}(x)w(x)\left(|x|+\frac{a_n}{n}\right)^{\rho}\right\}
 p_n(x)}{p'_n(x_{j,n})w(x_{j,n})(\frac{a_n}{n}+|x_{j,n}|)^{\rho}}\right|^{\nu} \frac{1}{|x-x_{j,n}|}\\ \nonumber
&& \quad \times \left\{\frac{T(a_n)}{a_n}+|Q'(x_{k,n})|+\frac{1}{x_{k,n}}\right\}
\left\{\frac{n}{a_{2n}-|x_{k,n}|}\right\}^{\nu-2} \\ \nonumber
&\le& CC_f (1+|x_{j,n}|)^{-\alpha}\left\{|Q'(x_{j,n})|+\frac{1}{|x_{j,n}|}\right\}^{-1}\\ \nonumber
&& \quad \times \Phi^{\frac{3\nu}{4}}(x_{j,n})\varphi_n^{\nu}(x_{j,n})\left(1-\frac{|x_{j,n}|}{a_n}\right)^{\frac{\nu}{4}}
\frac{1}{|x-x_{j,n}|}\\ \nonumber
&& \quad \times \left\{\frac{T(a_n)}{a_n}+|Q'(x_{k,n})|+\frac{1}{x_{k,n}}\right\}
\left\{\frac{n}{a_{2n}-|x_{k,n}|}\right\}^{\nu-2}.
\end{eqnarray}
On the other hand, from (\ref{eq2.1}), (\ref{eq2.8}) and $|x_{j,n}| < a_n/3$, we easily see
\begin{equation}\label{11}
\Phi_n^{\frac{3\nu}{4}}(x_{j,n})\varphi_n^{\nu}(x_{j,n})
\left(1-\frac{|x_{j,n}|}{a_n}\right)^{\frac{\nu}{4}}
\left(\frac{n}{a_{2n}-|x_{j,n}|}\right)^{\nu-2}
\le C\left(\frac{a_n}{n}\right)^2.
\end{equation}
Moreover, since $|Q'(x_{j,n})|+\frac{1}{|x_{j,n}|}\geqslant C>0$ for some constant $C$,
we see
\begin{equation}\label{eq3.14}
\left\{\frac{T(a_n)}{a_n}+|Q'(x_{j,n})|+\frac{1}{|x_{j,n}|}\right\}
\left\{|Q'(x_{j,n})|+\frac{1}{|x_{j,n}|}\right\}^{-1}
\le C\max\left\{1,\frac{T(a_n)}{a_n}\right\}
\end{equation}
and since $|x-{x_{j,n}}|\geqslant \frac{|x|}{2}$, we have
\begin{equation}\label{22}
\frac{1}{|x-x_{j,n}|} \le \frac{2}{|x|}.
\end{equation}
Thus, using (\ref{eq3.12}), (\ref{11}), (\ref{eq3.14}) and (\ref{22}),
we can estimate for $Y_{2,n}^{[1]}(\nu,f;x)$ as follows:
\begin{eqnarray*}
&&\left\{\Phi^{\frac{3}{4}}(x)w(x)\left(|x|+\frac{a_n}{n}\right)^{\rho}\right\}^{\nu} Y_{2,n}^{[1]}(\nu,f;x)\\
&\le& CC_f\max\left\{1,\frac{T(a_n)}{a_n}\right\}\frac{1}{|x|}\left(\frac{a_n}{n}\right)^2
\sum_{\substack{j;\,\,|x_{j,n}|< \frac{a_n}{3}, \\ |x-{x_{j,n}}|\geqslant \frac{|x|}{2}}}
(1+|x_{j,n}|)^{-\alpha} \\
&\le& CC_f\max\left\{1,\frac{T(a_n)}{a_n}\right\}\frac{1}{|x|}\frac{a_n^2}{n}
\le CC_f a_n \varepsilon_n\frac{1}{|x|}.
\end{eqnarray*}
Therefore we have by (\ref{eq2.9})
\begin{eqnarray*}
&&\left\|(1+|x|)^{-\Delta}\left\{\Phi^{\frac{3}{4}}(x)w(x)\left(|x|+\frac{a_n}{n}\right)^{\rho}\right\}^{\nu}
Y_{2,n}^{[1]}(\nu,f;x)\right\|_{L_p(1\le |x|\le a_{2n})}\\
&\le& CC_fa_n \varepsilon_n
\left\|(1+|x|)^{-(\Delta+1)}\right\|_{L_p(1\le |x|\le a_{2n})}
\le CC_f a_n \varepsilon_n a_n^{\frac{1}{p}-(\Delta+1)}
\le CC_fa_n\varepsilon_n.
\end{eqnarray*}
\end{proof}
\begin{proof}[Proof of Proposition \ref{Proposition2.7}]
Using (\ref{eq3.4}), we have from (\ref{eq2.2}) and (\ref{eq2.5})
\begin{eqnarray*}
&&\left\|(1+|x|)^{-\Delta}\left\{\Phi^{\frac{3}{4}}(x)w(x)\left(|x|+\frac{a_n}{n}\right)^{\rho}\right\}^{\nu}
Z_n(l,\nu,f;x)\right\|_{L_p(\mathbb{R})}\\
&\le& CC_f\frac{a_n\log n}{n}\left\|(1+|x|)^{-\Delta}\right\|_{L_p(|x|\le a_{2n})}\\
&\le& CC_f\frac{a_n\log n}{n}
\begin{cases}
         a_n^{\frac{1}{p}-\Delta},& \Delta p<1; \\
         \log a_n,& \Delta p=1;\\
         1, &\Delta p>1
\end{cases}
\le CC_f\frac{a_n^2\log n}{n}
\begin{cases}
         1, & \Delta p<1; \\
         \frac{\log a_n}{a_n},& \Delta p=1;\\
         \frac{1}{a_n}, &\Delta p>1.
\end{cases}
\end{eqnarray*}
\end{proof}
\begin{proof}[Proof of Proposition \ref{Proposition2.8}]
Since we see for a fixed polynomial $P\in \mathcal{P}_{\nu n-1}$
\begin{equation*}
L_n(\nu,P)-P=L_n(\nu,P)-L_n(\nu-1,\nu,P)=Z_n(\nu-1,\nu,P),
\end{equation*}
we can obtain from Proposition \ref{Proposition2.7},
\begin{eqnarray*}
&&\left\|(1+|x|)^{-\Delta}\left\{\Phi^{\frac{3}{4}}(x)w(x)\left(|x|+\frac{a_n}{n}\right)^{\rho}\right\}^{\nu}
\left\{L_n(\nu,P)-P\right\}\right\|_{L_p(\mathbb{R})}\\
&\le& CC_f\frac{a_n^2\log n}{n}
\begin{cases}
         1, & \Delta p<1; \\
         \frac{\log a_n}{a_n},& \Delta p=1;\\
         \frac{1}{a_n}, &\Delta p>1.
\end{cases}\rightarrow 0  \quad \textrm{ as } \quad  n\rightarrow \infty
\end{eqnarray*}
(note Remark \ref{Remark2.13}).
\end{proof}
\begin{proof}[Proof of Proposition \ref{Proposition2.9}]
Since we see for a fixed polynomial $P\in \mathcal{P}_{\nu n-1}$
\begin{equation*}
X_n(\nu,P)-P=L_n(\nu,P)-P+Y_n(\nu,P)
\end{equation*}
and $\varepsilon_n(a_n+ \log n) \to 0$ as $n \to 0$,
we have the result from Proposition \ref{Proposition2.6} and Proposition \ref{Proposition2.8}.
\end{proof}
\begin{proof}[Proof of Theorem \ref{Theorem2.11}]
Let $\varepsilon >0$ be taken arbitrarily. Then there exists a polynomial $P$ such that
\begin{equation}\label{eq3.16}
 \left\|\left(f(x)-P(x)\right)(1+|x|)^{\alpha}\left\{\Phi^{-\frac{3}{4}}(x)
  w(x)\left(|x|+\frac{a_n}{n}\right)^{\rho}\right\}^{\nu-\frac{\eta}{2}}\right\|_{L_{\infty}(\mathbb{R})}<\varepsilon.
\end{equation}
In fact, by (\ref{eq2.14}) we see
\begin{equation*}
\lim_{|x|\to \infty} \left|f(x)w^{\nu-\eta}(x)\right| =0,
\end{equation*}
hence as \cite[(4.55)]{[7]} we obtain (\ref{eq3.16}). Then we also have
\begin{equation}\label{eq3.17}
 \left\|\left(f(x)-P(x)\right)(1+|x|)^{\alpha}\left\{\Phi^{-\frac{3}{4}}(x)
  w_{\rho}(x)\right\}^{\nu}\right\|_{L_{\infty}(\mathbb{R})}<C\varepsilon,
\end{equation}
and from
\begin{equation*}
 \left\|(1+|x|)^{\Delta-\alpha}\left\{\Phi^{-\frac{3}{4}}(x)
  w(x)\left(|x|+\frac{a_n}{n}\right)^{\rho}\right\}^{\nu/2}\right\|_{L_{\infty}(\mathbb{R})}< \infty,
\end{equation*}
with (\ref{eq3.16}), we have
\begin{equation}\label{eq3.18}
 \left\|\left(f(x)-P(x)\right)(1+|x|)^{\Delta}\left\{\Phi^{-\frac{3}{4}}(x)
  w(x)\left(|x|+\frac{a_n}{n}\right)^{\rho}\right\}^{\nu}\right\|_{L_{\infty}(\mathbb{R})}<C\varepsilon.
\end{equation}
Now, we see
\begin{equation*}
\left|L_n(\nu,f)-f\right| \le \left|X_n(\nu,f-P)\right|+\left|X_n(\nu,P)-P\right|
+ \left|P-f\right|+ \left|Y_n(\nu,f)\right|.
\end{equation*}
From Proposition \ref{Proposition 2.5}  with $C_{f-P}=C\varepsilon$ and (\ref{eq3.17}), we know
\begin{equation} \label{eq3.19}
\left\|(1+|x|)^{-\Delta}\left\{\Phi^{\frac{3}{4}}(x)w(x)\left(|x|+\frac{a_n}{n}\right)^{\rho}\right\}^{\nu}
X_n(\nu,f-P)\right\|_{L_p(\mathbb{R})}
\le C\varepsilon.
\end{equation}
Then, using  (\ref{eq3.18}), (\ref{eq3.19}),
Proposition \ref{Proposition2.9} and Proposition \ref{Proposition2.6},
we conclude
\begin{eqnarray*}
\left\|(1+|x|)^{-\Delta}\left\{\Phi^{\frac{3}{4}}(x)w(x)\left(|x|+\frac{a_n}{n}\right)^{\rho}\right\}^{\nu}
\left(L_n(\nu,f)-f\right)\right\|_{L_p(\mathbb{R})}
\le C\varepsilon.
\end{eqnarray*}
Hence, we have the result.
\end{proof}
\begin{proof}[Proof of Theorem \ref{Theorem2.12}]
If we split $\left|L_n(l,\nu,f)-f\right|$ as follows:
\[
\left|L_n(l,\nu,f)-f\right| \le \left|L_n(\nu,f)-f\right| + \left|Z_n(l,\nu,f)\right|,
\]
then the result is proved from Theorem \ref{Theorem2.11} and Proposition \ref{Proposition2.7}.
\end{proof}

%\begin{split}\end{split}

%Hello, \TeX !

\end{document}